\documentclass[a4paper,11pt]{article}

\usepackage[a4paper]{geometry}

\usepackage{authblk}

\usepackage{amsmath}
\usepackage{amsthm}
\usepackage{amssymb}
\usepackage{amsfonts}
\usepackage{mathtools}
\usepackage{bm}
\usepackage{bussproofs}

\usepackage{cancel}
\usepackage{lscape}

\usepackage{float}
\usepackage{booktabs}
\usepackage{caption}
\usepackage{adjustbox}

\usepackage{graphicx}
\usepackage{tikz}
\usetikzlibrary{cd,backgrounds,positioning,trees,shapes,arrows,patterns,topaths,calc}

\usepackage[
  backend=biber,
  style=numeric,
  natbib=true,
  sorting=nty,
  giveninits=true,
  maxbibnames=99,
  abbreviate=false,
  useprefix=false,
  isbn=false
]{biblatex}

\usepackage{xurl}
\usepackage{hyperref}
\hypersetup{
  hidelinks,
  linktoc = all,
  breaklinks = true,
}
\usepackage{aliascnt}
\usepackage[nameinlink]{cleveref}

\newtheorem{theorem}{Theorem}
\numberwithin{theorem}{section}

\newaliascnt{lemma}{theorem}
\newtheorem{lemma}[lemma]{Lemma}
\aliascntresetthe{lemma}

\newaliascnt{corollary}{theorem}

\aliascntresetthe{corollary}

\newaliascnt{fact}{theorem}

\aliascntresetthe{fact}

\newaliascnt{question}{theorem}

\aliascntresetthe{question}

\newaliascnt{proposition}{theorem}
\newtheorem{proposition}[proposition]{Proposition}
\aliascntresetthe{proposition}

\Crefname{theorem}{Theorem}{Theorems}
\Crefname{lemma}{Lemma}{Lemmas}
\Crefname{corollary}{Corollary}{Corollaries}
\Crefname{fact}{Fact}{Facts}
\Crefname{question}{Question}{Questions}
\Crefname{proposition}{Proposition}{Propositions}

\theoremstyle{definition}

\newaliascnt{definition}{theorem}
\newtheorem{definition}[definition]{Definition}
\aliascntresetthe{definition}

\newaliascnt{remark}{theorem}
\newtheorem{remark}[remark]{Remark}
\aliascntresetthe{remark}

\newaliascnt{example}{theorem}

\aliascntresetthe{example}

\newaliascnt{notation}{theorem}

\aliascntresetthe{notation}

\Crefname{definition}{Definition}{Definitions}
\Crefname{remark}{Remark}{Remarks}
\Crefname{example}{Example}{Examples}
\Crefname{notation}{Notation}{Notations}

\theoremstyle{remark}

\newaliascnt{claim}{theorem}
\newtheorem{claim}[claim]{Claim}
\aliascntresetthe{claim}

\Crefname{claim}{Claim}{Claims}

\renewcommand{\sf}[1]{\mathsf{#1}}

\newcommand{\alg}[1]{\mathfrak{#1}}
    \newcommand{\A}{\alg{A}}

\renewcommand{\sp}[1]{\mathfrak{#1}}
    \newcommand{\X}{\sp{X}}
    \newcommand{\Y}{\sp{Y}}

\newcommand{\class}[1]{\mathcal{#1}}

    \newcommand{\J}{\class{J}}

    \newcommand{\R}{\class{R}}

    \newcommand{\KF}{\sf{KF}}

\newcommand{\fin}{_\mathrm{fin}}

\newcommand{\pow}[1]{\mathcal{P} (#1)}

\newcommand{\logic}[1]{\mathsf{#1}}

    \newcommand{\K}{\logic{K}}
    \newcommand{\Kf}{\logic{K4}}
    \newcommand{\Kff}[2]{\logic{K4^{#1}_{#2}}}
    \newcommand{\Sf}{\logic{S4}}

\newcommand{\NExt}[1]{\mathop{\mathsf{NExt}}{#1}}

\renewcommand{\sf}[1]{\mathsf{#1}}
\renewcommand{\P}{\mathsf{P}}

\newcommand{\Val}{\mathsf{Val}}

\renewcommand{\phi}{\varphi}
\newcommand{\emp}{\emptyset}

\newcommand{\Dia}{\Diamond}

\newcommand{\CSt}{\mathsf{CSt}}

\newcommand{\Mod}{\mathsf{Mod}}

\renewcommand{\L}{\mathcal{L}}

\newcommand{\NP}{\mathsf{NP}}
\newcommand{\coNP}{\mathsf{coNP}}

\newcommand{\HH}{\mathbb{H}}
\newcommand{\GG}{\mathbb{G}}

\newcommand{\CC}{\mathbb{C}}
\newcommand{\MM}{\mathbb{M}}
\newcommand{\KK}{\mathbb{K}}

\renewcommand{\sc}[1]{\textup{\textsc{#1}}}
\newcommand{\mNaeSat}{\sc{M-Nae-3-Sat}}
\newcommand{\SurCol}[1]{\sc{Sur-Col(#1)}}
\renewcommand{\Val}[1]{\sc{Val(#1)}}

\begin{document}

\title{Non-elementary modal logics, assuming $\P \neq \NP$}
\author{Tenyo Takahashi}
\affil{\small Institute for Logic, Language and Computation, University of Amsterdam \\ \href{mailto:t.takahashi@uva.nl}{t.takahashi@uva.nl}}
\date{}
\maketitle

\begin{abstract}
A modal logic is elementary if it is sound and complete with respect to an elementary class of Kripke frames. We prove that if a finitely axiomatizable modal logic $L$ is elementary, then the validity problem for $L$, which asks whether a given finite Kripke frame validates $L$, is in $\P$. This allows us to transfer complexity results in graph theory to the study of elementarity of modal logics. As an application, we construct infinitely many finitely axiomatizable $\Kf$-stable logics that are not elementary, assuming $\P \neq \NP$. This result provides a conditional negative answer to an open question in [Bezhanishvili et al., 2018] for $\Kf$-stable logics.
\end{abstract}

\section{Introduction}
This paper studies the connection between elementarity of modal logics and computational complexity. In particular, we prove the existence of non-elementary $\Kf$-stable logics, assuming $\P \neq \NP$. 

A modal logic $L$ is \emph{elementary}, also called \emph{elementarily determined} or \emph{first-order complete}, if $L$ is sound and complete with respect to an elementary class of Kripke frames. That is, there is a class $\class{K}$ of Kripke frames defined by a set of first-order sentences in the language of Kripke frames such that, for any modal formula $\phi$, it holds that $\phi \in L$ iff $\phi$ is valid on all frames in $\class{K}$. Elementarity has been extensively studied in modal logic (see, e.g., \cite[Chapter 10]{czModalLogic1997} and \cite{valentingorankoModelTheoryModal2007}), both as a route to Kripke completeness and as a way to understand the interplay between first-order logic and modal logic, which is intrinsically second-order. Many well-known modal logics are elementary, such as $\K$, $\Kf$, and $\Sf$. In fact, standard proofs of Kripke completeness via \emph{canonical frames} (see, e.g., \cite[Chapter 5]{czModalLogic1997} and \cite[Chapter 4]{blackburnModalLogic2001}) often make use of the corresponding first-order frame conditions, and thus prove the elementarity at the same time. Behind this phenomenon is the celebrated Fine's theorem \cite{ConnectionsElementaryModal1975}, which states that every elementary logic is \emph{canonical}, that is, it is valid on its canonical frame, which in turn implies Kripke completeness.

\emph{Stable logics} were introduced in \cite{stablecanonicalrules,stablemodallogic} as a filtration counterpart of \emph{subframe logics} \cite{fineK4II}, which are based on the idea of selective filtration. Let $M$ be a modal logic. A logic $L \supseteq M$ is \emph{$M$-stable} if $L$ is sound and complete with respect to a class of modal spaces validating $M$ which satisfies a certain closure property (see the beginning of Section \ref{Sec 3} for a precise definition). All $M$-stable logics have the finite model property if $M$ admits filtration \cite{stablemodallogic}, or more weakly, if $M$ admits definable filtration, a generalized version of the standard filtration \cite{takahashi2025stablecanonicalrulesformulas}. This result applies to, for example, $M = \K$, $\Kf$, and $\Sf$. The finite model property of such $M$-stable logics is parallel to the fact that all transitive subframe logics have the finite model property \cite{fineK4II}. On the other hand, while $\logic{Grz}$ is a well-known example of a non-elementary subframe logic (see, e.g., \cite[Theorem 6.8]{czModalLogic1997}), it remains open whether every $M$-stable logic is elementary, for reasonable $M$ such as $\K$, $\Kf$, or $\Sf$ \cite[Problem 6.4]{stablemodallogic} (see also \cite[Problem 4]{FiltrationRevisitedLattices2018}).

The elementarity of a logic $L$ should not be confused with the strictly stronger condition that the class $\KF(L)$ of all Kripke frames validating $L$ is elementary. Classical examples of an elementary logic with a non-elementary frame class include the logic $\logic{KMT}$ studied by Hughes \cite{hughesEveryWorldCan1990}, and the McKinsey-Lemmon logic $\logic{KM^\infty}$ originally defined by Lemmon \cite{LemmonNotes} and studied also in \cite{goldblattMcKinseyLemmonLogic2007}. Since elementarity concerns the existence of a suitable class of Kripke frames, proving a logic $L$ is not elementary requires showing that $L$ is sound and complete with respect to no elementary class of Kripke frames, which is often challenging.

This paper introduces a method to prove the non-elementarity for modal logics, conditioned on the computational complexity assumption $\P \neq \NP$. More specifically, we show that for a finitely axiomatizable elementary logic $L$, the validity problem \Val{$L$} of deciding whether a given finite frame validates $L$ is in $\P$. The proof combines an intermediate result in \cite{GoldblattHodkinsonVenema2004Erdos} used to disprove the converse of Fine's theorem with a compactness argument using the method of standard translations (see, e.g., \cite[Section 2.4]{blackburnModalLogic2001}). Intuitively, for an elementary logic $L$, the former allows us to find a ``nice'' elementary class $\class{K}$ that contains all finite frames validating $L$, and the latter allows us to take the class $\class{K}$ to be even nicer, in the sense that it is defined by finitely many first-order sentences, provided that $L$ is finitely axiomatizable. Since the truth of a fixed first-order sentence can be decided in polynomial time for a given finite frame, we conclude that \Val{$L$} is in $\P$ for any finitely axiomatizable elementary logic $L$. This yields a necessary condition for a finitely axiomatizable logic to be elementary. More importantly, it opens the possibility of transferring the numerous $\NP$-complete ($\NP$-hard) problems studied in graph theory to non-elementarity results in modal logic. This approach, as we will see, does not involve an arduous analysis of the canonical frame or canonical extensions of modal algebras, at the cost of assuming $\P \neq \NP$.

As an application, we prove that there are infinitely many non-elementary $\Kf$-stable logics. Since $\Kf$-stable logics are axiomatized by \emph{stable formulas} \cite{stablemodallogic}, whose validity has a semantic characterization on Kripke frames, we can turn the validity of $\Kf$-stable logics into combinatorial conditions between Kripke frames. This allows us to show that for infinitely many finitely axiomatizable $\Kf$-stable logics, their validity problems are $\coNP$-complete, by reducing (the complement problems of) decision problems in graph theory that are known to be $\NP$-complete. The main difficulty of the reduction is that many decision problems studied in graph theory, including the ones we use in the proofs, are defined for undirected graphs. We overcome this by directing edges in bipartite graphs based on their bipartitions, together with a careful design of the construction that turns an undirected graph into a Kripke frame. It follows from the necessary condition discussed in the previous paragraph that these infinitely many logics are not elementary, assuming $\P \neq \NP$, which is equivalent to $\P \neq \coNP$. This result provides a conditional negative answer to the open question \cite[Problem 6.4]{stablemodallogic} (see also \cite[Problem 4]{FiltrationRevisitedLattices2018}) for $\Kf$-stable logics. It suggests that there are non-elementary $\Kf$-stable logics, and one can probably construct them with a different method to avoid the complexity-theoretic assumption. If it were the case that all $\Kf$-stable logics are elementary, then a proof would imply $\P = \NP$. 

The paper is organized as follows. In \Cref{Sec 2}, we prove the necessary condition for a finitely axiomatizable logic to be elementary, namely, that the problem \Val{$L$} is in $\P$ (\Cref{Thm elementary in P}). In \Cref{Sec 3}, we show the existence of infinitely many non-elementary $\Kf$-stable logics, assuming $\P \neq \NP$ (\Cref{Thm K4-stable non-elem}). The two sections are mostly independent, except that \Cref{Sec 3} uses \Cref{Thm elementary in P} as a black box. 

We assume familiarity with the basics of modal logic including Kripke semantics, as well as the basics of computational complexity theory including the classes $\P$, $\NP$, and $\coNP$. See, e.g., \cite{czModalLogic1997,blackburnModalLogic2001} for the former, and \cite{aroraComputationalComplexityModern2009} for the latter. Other necessary preliminaries are introduced in each section. The drawings of Kripke frames follow the convention that $\bullet$ denotes an irreflexive point and $\circ$ denotes a reflexive point.

\section{Non-elementary modal logics} \label{Sec 2}

The aim of this section is to prove \Cref{Thm elementary in P}. It provides a necessary condition for a finitely axiomatizable logic to be elementary, namely, that the problem \Val{$L$} of deciding whether a given finite frame validates $L$ is in $\P$. This will be used in the next section to show that certain $\Kf$-stable logics are not elementary, assuming $\P \neq \NP$, by showing that \Val{$L$} is $\coNP$-complete for those logics. Formally, the validity problem \Val{$L$} for a logic $L$ is defined as follows.

\begin{definition}
    For a fixed logic $L$, the problem \Val{$L$} takes as input a finite Kripke frame $F$ and asks whether $F$ validates $L$.
\end{definition}

Note that the size of an input frame $F$ is measured by the number of points and relational instances in $F$, but not by the cardinality $|F|$ of the underlying set. This is compatible with the fact that for decision problems for graphs, the input size is measured by the number of vertices and edges, rather than the cardinality of the vertex set.

The problem \Val{$\K$} is trivially in $\P$, and \Val{$L$} is in $\P$ for many standard logics such as $\Kf$ and $\Sf$. In fact, these are special cases of the following observation. Let $\L$ be the first-order language of Kripke frames, that is, $\L$ is the first-order language of a binary relation symbol $R$ and equality. Kripke frames are naturally regarded as $\L$-structures. We use the same symbol $\models$ for the modal validity relation and the first-order satisfaction relation, assuming no confusion arises. Recall that a \emph{(global) first-order correspondent} of a modal formula $\phi$ is a first-order sentence $\alpha$ in $\L$ such that for any Kripke frame $F$, we have $F \models \phi$ iff $F \models \alpha$ \cite{vanBenthem1976ModalCorrespondenceTheory} (see also \cite[Section 3.1]{blackburnModalLogic2001}). If $\phi$ has a first-order correspondence $\alpha$, then the class $\KF(\K + \phi)$ of Kripke frames validating $\K + \phi$, regarded as a class of $\L$-structures, is defined by $\alpha$, so \Val{$\K + \phi$} is in $\P$ by the following lemma.

\begin{lemma} \label{Lem alpha in P}
    For a first-order sentence $\alpha$ in the language $\L$ of Kripke frames, the problem of deciding whether a given finite Kripke frame satisfies $\alpha$ is in $\P$.
\end{lemma}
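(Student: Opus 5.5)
The plan is to evaluate $\alpha$ directly on the given finite frame $F=(W,R)$ by the naive recursive model-checking algorithm. We then observe that, since $\alpha$ is fixed, its running time is polynomial in the size of $F$. Since $\alpha$ is fixed, we may first put it into a convenient syntactic form, for instance using only $\neg$, $\wedge$, $\exists$, with atomic formulas $xRy$ and $x=y$. This changes only constant factors. Let $k$ be the number of variables occurring in $\alpha$ and $d$ its length; both are constants independent of the input.

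The algorithm is defined by recursion on subformulas $\beta(\bar x)$ of $\alpha$ together with a partial assignment $s$ of its free variables to points of $W$. Atomic formulas are decided by looking up whether $(s(x),s(y))\in R$ or whether $s(x)=s(y)$. Given the input as lists of points and of relational instances, this takes time linear in the input size, or constant time after building an adjacency matrix in $O(|W|^2)$ preprocessing. Boolean connectives are handled by combining the recursive answers. For $\exists y\,\beta$, we loop over all $w\in W$ and recurse with $s[y\mapsto w]$.

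For the complexity bound, note that the recursion tree has depth at most $d$, and each quantifier node branches $|W|$ times. So the total number of calls is at most $O(d\cdot|W|^{q})$, where $q\le d$ is the quantifier depth. Each call costs at most polynomial time in the input size $n$ (which bounds $|W|$). The overall time is therefore $O(n^{q+c})$ for a constant $c$, which is polynomial because $q$ depends only on $\alpha$. Correctness follows by a straightforward induction on $\beta$: the procedure returns true iff $F\models\beta[s]$, which is just Tarski's definition of satisfaction.

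The only point needing care is the remark in the paper that the input size is the number of points plus relational instances. Since $|W|$ is at most this size, the exponent argument still goes through. No real obstacle is expected: the essential point is simply that the degree of the polynomial depends on the fixed sentence $\alpha$ but not on $F$.
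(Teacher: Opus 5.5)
Your proposal is correct and follows essentially the same approach as the paper: brute-force evaluation of the fixed sentence $\alpha$ over all assignments to its quantified variables, with polynomial-time atomic tests, giving a polynomial bound whose degree depends only on $\alpha$. You simply give more detail than the paper's short argument, namely the explicit $O(d\cdot|W|^{q})$ count of recursive calls and the observation that $|W|$ is bounded by the input size.
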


\begin{proof}
    Since $\alpha$ is fixed, it has a constant number of quantifiers. For a given finite Kripke frame $F$, we can evaluate $\alpha$ by brute force, checking all assignments to the quantifiers in $\alpha$, with each atomic relation test done in polynomial time. Thus, the overall running time is polynomial in the input size of $F$, so the decision problem is in $\P$.
\end{proof}

\begin{proposition} \label{Prop fo-cor P}
    If a logic $L$ is axiomatized by a modal formula with a first-order correspondence, then \Val{$L$} is in $\P$.
\end{proposition}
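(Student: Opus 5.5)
The plan is to reduce the statement directly to \Cref{Lem alpha in P}. Suppose $L = \K + \phi$, where $\phi$ has a first-order correspondent $\alpha$ in $\L$. I would first check that, for any Kripke frame $F$, we have $F \models L$ iff $F \models \phi$. The direction from left to right holds because $\phi \in L$. For the converse, the set of formulas valid on a fixed frame $F$ is a normal modal logic: it contains $\K$ and is closed under modus ponens, necessitation and uniform substitution. So if $F \models \phi$, then this set contains $\K + \phi = L$. This is the standard fact that frame validity is preserved by the rules of normal modal logics.

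Next, I combine this with the definition of first-order correspondent. For every Kripke frame $F$, and in particular every finite one, we get $F \models L$ iff $F \models \phi$ iff $F \models \alpha$. Hence \Val{$L$} is, as a set of instances, exactly the problem of deciding whether a given finite Kripke frame satisfies the fixed sentence $\alpha$. The input encoding is the same in both problems: a finite frame is given by its points and relational instances. By \Cref{Lem alpha in P}, that problem is in $\P$, and therefore so is \Val{$L$}.

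There is no real obstacle here. The only point needing care is that ``axiomatized by $\phi$'' means $L$ is the least normal logic containing $\phi$, so that validity of $L$ on a frame reduces to validity of the single formula $\phi$. This holds because frame validity is closed under the rules generating $L$ from $\K \cup \{\phi\}$.

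If one also wants to allow a finite set of axioms $\phi_1, \dots, \phi_n$, each with a correspondent $\alpha_i$, the same argument works. One takes the conjunction $\alpha_1 \wedge \dots \wedge \alpha_n$, which is again a single first-order sentence, or equivalently notes that $\phi_1 \wedge \dots \wedge \phi_n$ has that conjunction as its correspondent.
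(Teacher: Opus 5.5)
Your proposal is correct and follows the paper's proof: write $L = \K + \phi$, chain $F \models L$ iff $F \models \phi$ iff $F \models \alpha$, and invoke \Cref{Lem alpha in P}. You also justify the first equivalence by noting that frame validity is closed under the rules generating $\K + \phi$, a step the paper leaves implicit.
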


\begin{proof}
    Let $L = \K + \phi$ for some modal formula $\phi$ with a first-order correspondence $\alpha$. Then, for any finite Kripke frame $F$, we have $F \models L$ iff $F \models \phi$ iff $F \models \alpha$. Since $\alpha$ is a fixed first-order sentence, the problem of deciding whether $F \models \alpha$ is in $\P$ by \Cref{Lem alpha in P}. Thus, \Val{$L$} is in $\P$.
\end{proof}

In particular, it follows that \Val{$\K + \phi$} is in $\P$ for any Sahlqvist formula $\phi$ \cite{CompletenessCorrespondenceFirst1975} (see also \cite[Section 3.6]{blackburnModalLogic2001}).

This phenomenon does not straightforwardly generalize to elementary logics. The elementarity of a logic $L$ only implies that $L$ is sound and complete with respect to \emph{some} elementary class $\class{K}$ of Kripke frames, which might not contain all finite frames validating $L$. So, the proof of \Cref{Prop fo-cor P} does not go through; in particular, the equivalence $F \models L$ iff $F \in \class{K}$ may not hold for finite Kripke frames. Another issue is that the elementary class $\class{K}$ might be axiomatized by infinitely many first-order sentences, which again invalidates the proof. Even though checking one first-order sentence can be done in polynomial time by \Cref{Lem alpha in P}, it is not clear how to do this for infinitely many sentences. 

We fix these two issues by combining a trick used in \cite{GoldblattHodkinsonVenema2004Erdos} with a compactness argument to show that if a logic $L$ is finitely axiomatizable and elementary, then \Val{$L$} is in $\P$ (\Cref{Thm elementary in P}). The idea is as follows. Intuitively, \cite[Proposition 2.17]{GoldblattHodkinsonVenema2004Erdos} states that if a logic $L$ is elementary, then $L$ is in fact sound and complete with respect to a ``nice'' elementary class $\class{K}$ of Kripke frames that contains all finite frames validating $L$. Then, using a compactness argument with the method of standard translations (see, e.g., \cite[Section 2.4]{blackburnModalLogic2001}), we show that if $L$ is in addition finitely axiomatizable, then we can take an even ``nicer'' class $\class{K}$ that is axiomatizable by finitely many first-order sentences. Therefore, we may apply \Cref{Lem alpha in P} and conclude that \Val{$L$} is in $\P$.

Before proving the main theorem, let us recall some basics of the correspondence between modal algebras (Boolean algebras with operators) and Kripke frames, in the setting of unimodal logics; see, e.g., \cite[Chapter 5]{blackburnModalLogic2001} and \cite{venema6AlgebrasCoalgebras2007} for details. A \emph{modal algebra} is a pair $\A = (A, \Dia)$ of a Boolean algebra $A$ and a unary operation $\Dia$ on $A$ such that $\Dia 0 = 0$ and $\Dia (a \lor b) = \Dia a \lor \Dia b$. Valuations, satisfaction, and validity are defined as usual. For a modal algebra $\A = (A, \Dia)$, let $\A_+$ be the \emph{canonical structure}\footnote{We are inclined to call $\A_+$ the canonical frame of $\A$. But, unfortunately, this term is generally reserved for the \emph{canonical frame} of a logic. Thus, not to cause confusion, we follow \cite{GoldblattHodkinsonVenema2004Erdos} to call $\A_+$ the canonical structure of $\A$.} of $\A$, that is, $\A_+$ is the Kripke frame $(U(A), R_{\Dia})$, where $U(A)$ is the set of all ultrafilters of $A$ and $R_{\Dia}$ is defined by $u R_{\Dia} v$ iff $a \in v$ implies $\Dia a \in u$ for all $a \in A$. Another way to describe the canonical structure $\A_+$ is that $\A_+$ is the underlying Kripke frame of the dual \emph{modal space} (or \emph{descriptive frame}) of $\A$ (see, e.g., \cite[Section 8.4]{czModalLogic1997} and \cite[Section 5.5]{blackburnModalLogic2001}). For a Kripke frame $F = (F, R)$, let $F^+$ be the \emph{complex algebra} of $F$, that is, $F^+$ is the modal algebra $(\pow{F}, {\Dia}_R)$, where $\pow{F}$ is the Boolean algebra of all subsets of $F$ and ${\Dia}_R$ is defined by ${\Dia}_R(a) = R^{-1}[a]$. It is easy to see that $\A_+ \models \phi$ implies $\A \models \phi$ and $F^+ \models \phi$ iff $F \models \phi$. The algebra $(\A_+)^+$ is called the \emph{canonical extension} of $\A$, and the Kripke frame $(F^+)_+$ is called the \emph{ultrafilter extension} of $F$ (see also \cite[Section 2.5]{blackburnModalLogic2001}). For a logic $L$, following the notation in \cite{GoldblattHodkinsonVenema2004Erdos}, we write $\CSt(L) = \{\A_+: \A \models L\}$ and $\KF(L) = \{F: F^+ \models L\} = \{F: F \models L\}$. A logic $L$ is \emph{canonical} if $\CSt(L) \subseteq \KF(L)$.

The following fact is a reformulation of \cite[Proposition 2.17]{GoldblattHodkinsonVenema2004Erdos} in terms of unimodal logics, which is a consequence of \cite[Theorem 3.6.7]{Goldblatt1989Varieties} and \cite[Theorem 4.12]{Goldblatt1995Elementary}. It was used in \cite{GoldblattHodkinsonVenema2004Erdos} to show that certain canonical logics are not elementary, thus refuting the converse of Fine's theorem \cite{ConnectionsElementaryModal1975}.

\begin{proposition} \label{Prop goldblatt}
    A logic $L$ is elementary iff there is an elementary class $\class{K}$ of Kripke frames satisfying $\CSt(L) \subseteq \class{K} \subseteq \KF(L)$.
\end{proposition}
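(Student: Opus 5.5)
The plan is to prove the two directions of \Cref{Prop goldblatt} separately. The backward direction is short. The forward direction needs the standard algebraic machinery linking modal algebras, canonical structures and ultrafilter extensions.

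\emph{Right to left.} Suppose $\class{K}$ is elementary and $\CSt(L) \subseteq \class{K} \subseteq \KF(L)$. We show $L$ is sound and complete with respect to $\class{K}$.

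Soundness holds because every frame in $\class{K}$ lies in $\KF(L)$.

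For completeness, take $\phi \notin L$. The Lindenbaum--Tarski algebra $\A_L$ validates $L$ and refutes $\phi$. The refuting valuation on $\A_L$ transfers to the canonical extension $(\A_L)_+^{\,+}$, since $\A$ embeds into $(\A_+)^+$ via the Stone map, so $(\A_L)_+ \not\models \phi$. But $(\A_L)_+ \in \CSt(L) \subseteq \class{K}$, so $\phi$ fails on some frame of $\class{K}$. Hence $L$ is elementary.

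\emph{Left to right.} Suppose $L$ is complete with respect to an elementary class $\class{C}$. The natural candidate for $\class{K}$ is the class of frames elementarily equivalent to ultrafilter extensions of frames in $\class{C}$, or more robustly the elementary closure of $\{(\A)_+ : \A \models L\}$.

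The cleanest route is the following. Let $\class{K}$ be the class of all models of the first-order theory $T$ consisting of sentences true in every frame of $\class{C}$. Then $\class{K} = \class{C}$, since $\class{C}$ is elementary, and so $\class{K} \subseteq \KF(L)$. The remaining inclusion $\CSt(L) \subseteq \class{K}$ is the real content.

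To prove it, take $\A \models L$. Since $L$ is the logic of $\class{C}$, the variety generated by $\{F^+ : F \in \class{C}\}$ contains $\A$. So $\A \in \mathbf{HSP}(\class{C}^+)$.

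We then use Goldblatt's theorem, \cite[Theorem 3.6.7]{Goldblatt1989Varieties} together with \cite[Theorem 4.12]{Goldblatt1995Elementary}. It says that for an elementary class $\class{C}$, the canonical structure of any algebra in $\mathbf{HSP}(\class{C}^+)$ is a bounded morphic image of an inner substructure of an ultrafilter extension of an ultraproduct, or a disjoint union, of members of $\class{C}$.

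This does not land in $\class{C}$ itself. So $\class{K}$ has to be enlarged to $\class{K} = \{G : G \models L \text{ and } G \text{ has the form } (\A)_+ \text{ for some } \A \models L\}$ closed under elementary equivalence. We then argue that this class is still elementary and contained in $\KF(L)$. This uses the fact that ultrafilter extensions of frames in an elementary class are elementarily equivalent to ultrapowers of those frames, and that $\KF(L)$ is closed under the relevant operations.

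\emph{Main obstacle.} The hard step is exactly this forward inclusion. We must show that the elementary closure of $\CSt(L)$ stays inside $\KF(L)$, i.e.\ that elementary completeness of $L$ forces canonicity in a first-order-robust way. Fine's theorem gives $\CSt(L) \subseteq \KF(L)$, but closure under elementary equivalence is not automatic.

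I would try first to follow Goldblatt's argument: show $\CSt(L)$ is contained in the class of frames elementarily equivalent to ultrapowers of members of $\class{C}$, together with bounded morphic images and inner substructures. Then I would replace these operations by the single elementary class $\mathrm{Mod}(T')$, where $T'$ is the set of first-order sentences true in all such frames. Its membership in $\KF(L)$ would be checked via Fine's canonicity and ultrafilter-extension arguments.

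Since the statement is cited from \cite{GoldblattHodkinsonVenema2004Erdos}, in the paper I would ultimately defer this step to those references rather than reprove it.
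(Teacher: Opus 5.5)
\paragraph{Verdict.} Your right-to-left direction is correct. Soundness follows from $\class{K}\subseteq\KF(L)$, and completeness holds because $(\A_L)_+\in\CSt(L)\subseteq\class{K}$ and $\A_L$ embeds into $((\A_L)_+)^+$. The left-to-right direction, however, is not proved, and your sketch of it would fail. The paper does not prove this proposition either; it only imports it from \cite{GoldblattHodkinsonVenema2004Erdos}. Deferring is therefore consistent with the paper, but the sketch itself relies on false claims.

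\paragraph{Where the forward sketch fails.}
\begin{itemize}
\item $\class{K}=\class{C}$ is not a candidate at all. For example, $\K$ is complete for the elementary class of irreflexive frames, yet a single reflexive point lies in $\CSt(\K)$, since finite frames are isomorphic to their ultrafilter extensions.
\item Closing $\CSt(L)$ under $\equiv$ does not by itself give an elementary class; the finite frames are closed under $\equiv$ and are not elementary. The right candidate is $\Mod(\mathrm{Th}(\CSt(L)))$. That class also contains ultraproducts of canonical structures, and you give no reason why these validate $L$.
\item The fact you plan to use is false. $\mathrm{ue}(\omega,<)$ has reflexive points, because every non-principal ultrafilter is related to itself. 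Every ultrapower of $(\omega,<)$ is irreflexive. So ultrafilter extensions are not in general elementarily equivalent to ultrapowers; they are only bounded morphic images of suitable ultrapowers.
\item $\KF(L)$ is not closed under ultrapowers or under $\equiv$ in general. For instance, $(\omega,>)$ validates $\logic{GL}$, but its non-principal ultrapowers do not. So ``closure under the relevant operations'' cannot carry the argument.
\end{itemize}

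\paragraph{The missing idea.} Pass through an intermediate class that is closed under ultraproducts, and use the fact that $\KF(L)$ is closed under ultra\emph{roots}. Let $\class{K}_0$ be the class of inner substructures of bounded morphic images of disjoint unions of frames in $\class{C}$.
\begin{itemize}
\item $\CSt(L)\subseteq\class{K}_0$. If $\A\models L$, then $\A$ is a homomorphic image of some $B\le\prod_iF_i^+\cong(\biguplus_iF_i)^+$ with $F_i\in\class{C}$. Then $\A_+$ is an inner substructure of $B_+$. $B_+$ is a bounded morphic image of $\mathrm{ue}(\biguplus_iF_i)$, which is in turn a bounded morphic image of an ultrapower of $\biguplus_iF_i$. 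An ultraproduct of disjoint unions is the disjoint union of ultraproducts of the summands, and these lie in $\class{C}$.
\item $\class{K}_0\subseteq\KF(L)$, since validity is preserved by the three operations. Moreover, $\class{K}_0$ is closed under ultraproducts, because ultraproducts commute with inner substructures, bounded morphic images and disjoint unions.
\item $\KF(L)$ is closed under ultraroots. A valuation refuting $\phi$ on $G$ lifts to $G^W$ by Lo\'s, so $G^W\models L$ implies $G\models L$.
\end{itemize}
Now suppose $G\equiv K\in\class{K}_0$. Keisler--Shelah gives an ultrafilter $W$ with $G^W\cong K^W\in\class{K}_0\subseteq\KF(L)$, hence $G\models L$. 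Therefore $\Mod(\mathrm{Th}(\class{K}_0))$, which is the $\equiv$-closure of $\class{K}_0$, is an elementary class lying between $\CSt(L)$ and $\KF(L)$.
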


Now we prove the main theorem of this section with an intermediate lemma. For a class $\class{K}$ of Kripke frames, we write $\class{K}\fin$ for the class of all finite Kripke frames in $\class{K}$.

\begin{lemma} \label{Lem exists K}
    If $L$ is a finitely axiomatizable elementary logic, then there exists a finitely axiomatizable elementary class $\class{K}$ of Kripke frames such that $\class{K} \subseteq \KF(L)$ and $\class{K}\fin = \KF(L)\fin$.
\end{lemma}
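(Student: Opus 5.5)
We start from \Cref{Prop goldblatt}: since $L$ is elementary, there is an elementary class $\class{K}_0$, say defined by a set $T$ of first-order sentences in $\L$, with $\CSt(L) \subseteq \class{K}_0 \subseteq \KF(L)$. Every finite frame $F \models L$ is in $\CSt(L)$. Indeed, $F^+ \models L$, and for finite $F$ the ultrafilters of $F^+$ are principal, so $(F^+)_+ \cong F$. Hence $\KF(L)\fin \subseteq \class{K}_0 \subseteq \KF(L)$, and so $(\class{K}_0)\fin = \KF(L)\fin$. The only remaining task is to replace the possibly infinite theory $T$ by a finite subset. Any class between $\class{K}_0$ and $\KF(L)$ automatically has the right finite part, so it suffices to find a finite $T_0 \subseteq T$ whose models all validate $L$.

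For this we use finite axiomatizability. Write $L = \K + \phi$ for a single formula $\phi$ (take the conjunction of the finitely many axioms), with propositional letters $p_1, \dots, p_n$. Frame validity $F \models \phi$ is expressed by the monadic second-order sentence $\forall P_1 \cdots \forall P_n \forall x\, ST_x(\phi)$, where $ST_x$ is the standard translation. We expand $\L$ by fresh unary predicates $P_1, \dots, P_n$ and fresh constant $c$, and consider the first-order theory
\[
T \cup \{\neg ST_c(\phi)\}
\]
in the expanded language. Every model of $T$ lies in $\class{K}_0 \subseteq \KF(L)$, so its underlying frame validates $\phi$ under every interpretation of the $P_i$ (as arbitrary subsets) and every choice of $c$. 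Hence this expanded theory has no model. By compactness there is a finite $T_0 \subseteq T$ such that $T_0 \cup \{\neg ST_c(\phi)\}$ is already unsatisfiable. This means that every frame $F \models T_0$, with any valuation interpreting the $P_i$ and any point $c$, satisfies $ST_c(\phi)$; that is, $F \models \phi$, and so $F \models L$.

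Let $\class{K}$ be the class defined by $T_0$, or equivalently by the single sentence $\bigwedge T_0$. Then $\class{K}$ is finitely axiomatizable and $\class{K} \subseteq \KF(L)$. Since $T_0 \subseteq T$, we have $\class{K}_0 \subseteq \class{K}$, so $\KF(L)\fin \subseteq (\class{K}_0)\fin \subseteq \class{K}\fin \subseteq \KF(L)\fin$, which gives the required equality.

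The main subtle point is the compactness step. The quantification over valuations is second-order, but because the $P_i$ are interpreted freely in models of the expanded language, they range over all subsets. So unsatisfiability of the expanded first-order theory exactly captures "every model of $T$ validates $\phi$", and compactness applies. The other point needing care is the identification $(F^+)_+ \cong F$ for finite $F$, which ensures that $\KF(L)\fin \subseteq \CSt(L)$.
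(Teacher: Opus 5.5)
Your proof is correct and follows essentially the same route as the paper. You get $\class{K}_0$ from \Cref{Prop goldblatt}, use $(F^+)_+ \cong F$ for finite $F$ to match the finite parts, and then use the standard translation with compactness to extract a finite $T_0 \subseteq T$ with $\Mod(T_0) \subseteq \KF(L)$; the only differences are cosmetic (you argue directly from the unsatisfiability of $T \cup \{\neg ST_c(\phi)\}$ with a fresh constant, whereas the paper argues by contradiction using $\neg \forall x\, ST_x(\phi)$).
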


\begin{proof}
    Let $L$ be a finitely axiomatizable elementary logic. Without loss of generality, we may assume that $L = \K +\phi$ for some modal formula $\phi$. By \Cref{Prop goldblatt}, there is an elementary class $\class{K}$ of Kripke frames such that $\CSt(L) \subseteq \class{K} \subseteq \KF(L)$. 
    
    First, we show that $\class{K}\fin = \KF(L)\fin$. Since $\class{K} \subseteq \KF(L)$, we have $\class{K}\fin \subseteq \KF(L)\fin$. Conversely, let $F \in \KF(L)\fin$. Then, $F \models L$, so $F^+$ is a finite modal algebra validating $L$, and thus $(F^+)_+ \in \CSt(L)$. Since $F$ is finite, it is isomorphic to its ultrafilter extension $(F^+)_+$ (see, e.g., \cite[Section 2.5]{blackburnModalLogic2001}), and thus $F \in \CSt(L) \subseteq \class{K}$, which implies $F \in \class{K}\fin$ since $F$ is finite. Therefore, $\class{K}\fin = \KF(L)\fin$.
    
    Recall that $\L$ is the first-order language of Kripke frames. For a first-order theory $T'$, we write $\Mod(T')$ for the class of structures satisfying $T'$. Let $T$ be a possibly infinite first-order theory in $\L$ axiomatizing $\class{K}$, that is, $\Mod(T) = \class{K}$. We show that if there is a finite subset $T_0 \subseteq T$ such that $\Mod(T_0) \subseteq \KF(L)$, then $\Mod(T_0)$ is the desired class of Kripke frames. Clearly, $\Mod(T_0)$ is a finitely axiomatizable elementary class. Since $T_0 \subseteq T$, we have $\Mod(T) \subseteq \Mod(T_0)$, and thus $\KF(L)\fin = \class{K}\fin = \Mod(T)\fin \subseteq \Mod(T_0)\fin$. For the other inclusion, we have $\Mod(T_0) \subseteq \KF(L)$ by assumption, which implies $\Mod(T_0)\fin \subseteq \KF(L)\fin$, and hence $\Mod(T_0)\fin = \KF(L)\fin$.
    
    Finally, we show that such a finite subset $T_0 \subseteq T$ indeed exists by a compactness argument. Suppose for a contradiction that no finite subset $T_0 \subseteq T$ satisfies $\Mod(T_0) \subseteq \KF(L)$. Then, for each finite subset $T_0 \subseteq T$, there is a Kripke frame $F_{T_0} \in \Mod(T_0)$ such that $F_{T_0} \not\in \KF(L)$, which means that $F_{T_0} \not\models \phi$. Let $\L_\phi$ be the first-order language extending $\L$ with an additional predicate symbol for each propositional variable occurring in $\phi$. A Kripke model $(F, V)$ considered in the context of $\phi$, with the valuation $V$ restricted to the propositional variables occurring in $\phi$, can naturally be regarded as an $\L_\phi$-structure by interpreting each additional predicate symbol $P$ as the truth set $V(p)$ of the corresponding propositional variable $p$ in $\phi$. Let $ST_x(\phi)$ be the standard translation of $\phi$, a first-order formula in $\L_\phi$ with free variable $x$, and $\alpha = \forall x ST_x(\phi)$. Then, $\alpha$ is a first-order sentence in $\L_\phi$ such that a Kripke model validates $\phi$ iff it satisfies $\alpha$. Since $F_{T_0} \not\models \phi$, there is a valuation $V_{T_0}$ on $F_{T_0}$ over the propositional variables occurring in $\phi$ such that $(F_{T_0}, V_{T_0}) \not\models \phi$. Let $M_{T_0}$ be the Kripke model $(F_{T_0}, V_{T_0})$. Then, $M_{T_0} \not\models \alpha$, while $M_{T_0} \models T_0$ since its underlying frame $F_{T_0} \models T_0$. Therefore, the $\L_\phi$-theory $T_0 \cup \{\lnot \alpha\}$ is satisfiable, for each finite subset $T_0 \subseteq T$. By the compactness theorem for first-order logic, the $\L_\phi$-theory $T \cup \{\lnot \alpha\}$ is satisfiable. However, this contradicts $\class{K} \subseteq \KF(L)$. Indeed, if $M$ is an $\L_\phi$-structure such that $M \models T \cup \{\lnot \alpha\}$, then $M \models T$ and $M \not\models \phi$. For the underlying frame $F$ of $M$, the former implies $F \in \Mod(T)$ since $T$ is an $\L$-theory and $F$ is the $\L$-reduct of $M$, and the latter implies $F \not\models \phi$. These contradict $\class{K} = \Mod(T) \subseteq \KF(L)$. Thus, there is a finite subset $T_0 \subseteq T$ such that $\Mod(T_0) \subseteq \KF(L)$, completing the proof.
\end{proof}

\begin{remark}
    \Cref{Lem exists K} is a useful variant of \Cref{Prop goldblatt} (\cite[Proposition 2.17]{GoldblattHodkinsonVenema2004Erdos}). Moreover, the conclusion of \Cref{Lem exists K}, if we drop the condition $\class{K} \subseteq \KF(L)$, was called \emph{finitely elementary} and studied in \cite{balbianiEveryWorldCan}. We note that the converse of \Cref{Lem exists K} does not hold. The logics $\logic{KM^\infty}$ and $\logic{KMT}$ mentioned in the introduction serve as counterexamples. Hughes \cite{hughesEveryWorldCan1990} showed that $\logic{KMT}$ satisfies the conclusion of \Cref{Lem exists K} by taking $\class{K}$ to be the class of all Kripke frames satisfying $\forall x \exists y (xRy \land yRy)$, but $\logic{KMT}$ is not finitely axiomatizable. Similarly, $\logic{KM^\infty}$ satisfies the conclusion of \Cref{Lem exists K} by taking $\class{K}$ to be the class of all Kripke frames satisfying 
    \[\forall x \exists y (xRy \land \forall z \forall z' (yRz \land yRz' \to z = z')),\]
    which follows from \cite[Theorem 7]{balbianiEveryWorldCan} (the first-order sentence was identified by Lemmon \cite{LemmonNotes} to prove the Kripke completeness), but $\logic{KM^\infty}$ is not finitely axiomatizable \cite{goldblattMcKinseyLemmonLogic2007}.
\end{remark}

\begin{theorem} \label{Thm elementary in P}
    If $L$ is a finitely axiomatizable elementary logic, then \Val{$L$} is in $\P$.
\end{theorem}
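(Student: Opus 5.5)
The proof will be a direct combination of \Cref{Lem exists K} and \Cref{Lem alpha in P}. Let $L$ be a finitely axiomatizable elementary logic. By \Cref{Lem exists K}, there is a finitely axiomatizable elementary class $\class{K}$ of Kripke frames with $\class{K} \subseteq \KF(L)$ and $\class{K}\fin = \KF(L)\fin$. Fix a finite set $T_0 = \{\beta_1, \dots, \beta_n\}$ of first-order $\L$-sentences with $\Mod(T_0) = \class{K}$, and let $\beta = \beta_1 \land \dots \land \beta_n$. Then $\beta$ is a single first-order sentence in $\L$ defining $\class{K}$.

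Next I will show that, for every finite Kripke frame $F$, we have $F \models L$ iff $F \models \beta$. If $F \models L$, then $F \in \KF(L)\fin = \class{K}\fin$, so $F \models \beta$. Conversely, if $F \models \beta$, then $F \in \class{K}\fin = \KF(L)\fin$, so $F \models L$. Here I only need the equality of finite parts, which is exactly what \Cref{Lem exists K} provides. This is the key point that the naive argument after \Cref{Prop fo-cor P} lacked.

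Given this equivalence, \Val{$L$} coincides on finite frames with the problem of deciding whether a given finite frame satisfies the fixed sentence $\beta$. By \Cref{Lem alpha in P}, that problem is in $\P$, and hence \Val{$L$} is in $\P$.

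There is no real obstacle left at this stage. All the difficulty (Goldblatt's characterization and the compactness argument) is already absorbed into \Cref{Lem exists K}. The one point worth noting is that $\beta$ depends only on $L$ and not on the input, so the polynomial bound from \Cref{Lem alpha in P} is uniform in the input frame. The measure of input size (points plus relational instances) is at least the number of points, so brute-force evaluation in time $O(|F|^k)$, where $k$ is the quantifier depth of $\beta$, is polynomial in the input size.
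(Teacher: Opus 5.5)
Your proof is correct and matches the paper's argument: use \Cref{Lem exists K} to get a finitely axiomatizable class agreeing with $\KF(L)$ on finite frames, take a single sentence defining that class, and apply \Cref{Lem alpha in P}. Conjoining the finitely many axioms into one sentence $\beta$ is a step the paper leaves implicit, and your remark on input size is consistent with how the paper measures input size.
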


\begin{proof}
    Let $L$ be a finitely axiomatizable elementary logic. By \Cref{Lem exists K}, there is a finitely axiomatizable elementary class $\class{K}$ of Kripke frames such that $\class{K} \subseteq \KF(L)$ and $\class{K}\fin = \KF(L)\fin$. Let $\alpha$ be a first-order sentence axiomatizing $\class{K}$. Then, for each finite Kripke frame $F$, we have $F \models L$ iff $F \in \KF(L)\fin$ iff $F \in \class{K}\fin$ iff $F \models \alpha$. Since $\alpha$ is a fixed first-order sentence, the problem of deciding whether $F \models \alpha$ is in $\P$ by \Cref{Lem alpha in P}. Thus, \Val{$L$} is in $\P$.
\end{proof}

\begin{remark}
    Other than our main application in \Cref{Sec 3}, \Cref{Thm elementary in P} can be used to show that some (union-)splittings in $\NExt{\K}$ are not elementary, assuming $\P \neq \NP$. Splittings and Union-splittings are defined lattice-theoretically, but they are axiomatized by Jankov formulas $\J(F)$ for finite rooted Kripke frames $F$, which have semantic characterizations similar to stable formulas in terms of p-morphisms. We refer the reader to \cite[Section 10.5]{czModalLogic1997} and \cite[Section 7]{stablecanonicalrules} for precise definitions and details. However, Rafter \cite{rafterPartialCharacterizationCanonical1994} already discovered (union-)splittings that are not canonical, which by Fine's theorem implies that they are not elementary (with no assumption). Thus, we keep this application as a concise remark. 

    Recall that the monotone Not-All-Equal 3-Satisfiability problem \mNaeSat{} takes as input a 3-CNF formula $\phi$ where every literal is positive, namely, a propositional variable, and asks whether there is a truth assignment such that each clause has at least one true variable and at least one false variable. It is known that \mNaeSat{} is $\NP$-complete (see, e.g., [SP4] in \cite[Appendix A3.1]{garey1979computers}). By reducing \mNaeSat{} in a similar but much simpler way as in \Cref{Sec 3}, one can show that the validity problem \Val{$\K + \J(F)$}, where $F$ is the Kripke frame depicted in \Cref{Fig F}, is $\coNP$-complete. The idea is to use the two points $t$ and $f$ to represent true and false variables. Then, the condition of p-morphisms requires for each clause to contain at least one true and one false variable. By \Cref{Thm elementary in P}, this (union-)splitting is not elementary, assuming $\P \neq \NP$.

    \begin{figure}[htb]
    \centering
    \begin{tikzpicture}
    \node[label={right:$r$}] (r) at (0,0) {$\bullet$};
    \node[label={right:$c$}] (c) at (0,1) {$\bullet$};
    \node[label={right:$t$}] (t) at (-1,2) {$\bullet$};
    \node[label={right:$f$}] (f) at (1,2) {$\bullet$};

    \draw[->] (r) -- (c);
    \draw[->] (c) -- (t);
    \draw[->] (c) -- (f);

\end{tikzpicture}
    \caption{The Kripke frame $F$.}
    \label{Fig F}
\end{figure}
\end{remark}

\section{Non-elementary \texorpdfstring{$\Kf$}{K4}-stable logics} \label{Sec 3}

In this section, we show the existence of non-elementary $\Kf$-stable logics, assuming $\P \neq \NP$. More specifically, we show that for certain $\Kf$-stable logics $L$, the problem \Val{$L$} is $\coNP$-complete, which by the contraposition of \Cref{Thm elementary in P} implies that $L$ is not elementary, assuming $\P \neq \NP$. 

First, let us recall stable logics and stable formulas from \cite{stablemodallogic} (see also \cite{FiltrationRevisitedLattices2018} for a more comprehensive account). We follow the relational approach since it is more directly related to graph theory. A \emph{modal space}, also known as a \emph{descriptive frame}, is a pair $\X = (X, R)$ of a Stone space and a binary relation $R \subseteq X \times X$ such that $R[x]$ is closed for each $x \in X$ and the set $R^{-1}[U]$ is clopen for each clopen set $U \subseteq X$. A continuous map $f: \X \to \Y = (Y, Q)$ between modal spaces is \emph{stable} if it is relation-preserving, i.e., $xRy$ implies $f(x)Qf(y)$ for all $x, y \in X$. If $f: \X \to \Y$ is a surjective stable map, we call $\Y$ a \emph{stable image} of $\X$. Finite modal spaces are identified with finite Kripke frames with the discrete topology, so any map from a finite Kripke frame is always continuous. Let $M$ be a modal logic. A class of $M$-spaces is \emph{$M$-stable} if it is closed under stable images that validate $M$. A modal logic $L \supseteq M$ is \emph{$M$-stable} if $L$ is sound and complete with respect to an $M$-stable class of $M$-spaces. 

Stable formulas are a special type of \emph{stable canonical formulas} \cite{stablecanonicalrules,stablemodallogic}, and are designed to axiomatize stable logics. Recall that a $\Kf$-frame is simply a transitive Kripke frame. Each finite rooted $\Kf$-frame $F$ induces a stable formula $\gamma(F, \emp)$, which has the property that for any $\Kf$-space $\X$, we have $\X \not\models \gamma(F, \emp)$ iff $F$ is a stable image of a topo-rooted closed upset of $\X$ \cite[Theorem 6.8]{stablecanonicalrules}. In particular, restricting to finite $\Kf$-frames, we obtain the following fact. Recall that a subset $U$ of a Kripke frame $G = (G, R)$ is an \emph{upset} if $x \in U$ and $xRy$ implies $y \in U$.

\begin{proposition} \label{Prop stable formulas}
    Let $\gamma(F, \emp)$ be a stable formula where $F$ is a finite rooted $\Kf$-frame. Then, for any finite $\Kf$-frame $G$, we have 
    \[G \not\models \gamma(F, \emp) \text{ iff $F$ is a stable image of a rooted upset of $G$}.\]
\end{proposition}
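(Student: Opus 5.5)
The plan is to specialize the general characterization \cite[Theorem 6.8]{stablecanonicalrules} to the case where the $\Kf$-space is a finite $\Kf$-frame $G$, regarded as a modal space with the discrete topology. That theorem says that for any $\Kf$-space $\X$, we have $\X \not\models \gamma(F, \emp)$ iff $F$ is a stable image of a topo-rooted closed upset of $\X$. So it suffices to check three things, one for each ingredient of that statement, when $\X = G$ is finite and discrete.

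\textbf{Step 1: modal validity.} A finite Kripke frame with the discrete topology is a modal space, and its clopen sets are all subsets. Hence validity of a formula on the modal space $G$ (quantifying over clopen valuations) coincides with Kripke validity on $G$ (quantifying over all valuations). Transitivity of $G$ makes it a $\Kf$-space, so the theorem applies.

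\textbf{Step 2: topological side conditions.} In a discrete space every subset is closed, so "closed upset" reduces to "upset". Continuity of any map out of such an upset is automatic, as the paper already notes for finite frames. Thus "stable image" reduces to the existence of a surjective relation-preserving map.

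\textbf{Step 3: topo-rootedness versus rootedness.} This is the only point needing care. Recall that an upset $U$ is topo-rooted if there is $r \in U$ such that $U$ is the topological closure of $\{r\} \cup R[r]$, equivalently that $U$ is the smallest closed set containing $r$ and its successors. In a discrete space the closure of a set is the set itself. So $U$ is topo-rooted iff $U = \{r\} \cup R[r]$ for some $r$, i.e.\ $U = R^*[r]$ with $R^*$ the reflexive closure (which suffices by transitivity). This is exactly the notion of a rooted upset, i.e.\ the point-generated subframe. I expect this identification, which depends on the precise definition of topo-rootedness in \cite{stablecanonicalrules}, to be the main thing to verify; the rest is bookkeeping.

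Combining the three steps, the equivalence of \cite[Theorem 6.8]{stablecanonicalrules} becomes: $G \not\models \gamma(F, \emp)$ iff there is a rooted upset $U$ of $G$ and a surjective relation-preserving map $U \to F$. That is the stated proposition.
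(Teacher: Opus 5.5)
Your proposal is correct and follows the paper's route: the paper obtains this proposition simply by restricting \cite[Theorem 6.8]{stablecanonicalrules} to finite $\Kf$-frames. Your three steps make explicit the routine checks this restriction involves (discrete topology, closed upsets are just upsets, continuity is automatic, and topo-rooted reduces to rooted).

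One small correction in Step 3: the definition of topo-rootedness you recall cannot be the intended one, because $\{r\} \cup R[r]$ is already closed in every modal space (singletons are closed in a Stone space, and $R[r]$ is closed by definition), so your definition would coincide with rootedness everywhere. The notion in \cite{stablecanonicalrules} is, I believe, the dual of subdirect irreducibility, namely that the set of roots has nonempty interior. In a finite discrete space this again just says that a root exists, so your conclusion stands.
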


This characterization of stable formulas will be particularly useful for our purpose. As we will see, it serves as a bridge between the validity problem for logics axiomatized by stable formulas and decision problems in graph theory. Although all $\Kf$-stable logics are axiomatized by stable formulas \cite[Theorem 4.7]{stablemodallogic}, the converse does not hold in general \cite[Example 4.11]{stablemodallogic}. But the stable formulas $\gamma(F, \emp)$ we will consider are based on $F$ with a reflexive root. These stable formulas indeed axiomatize stable logics, by the following fact \cite[Proposition 4.13]{stablemodallogic}.

\begin{lemma} \label{Lem K4-stable}
    Let $L = \Kf + \{\gamma(F_i, \emp): i \in I\}$, where each $F_i$ is a finite rooted $\Kf$-frame with a reflexive root. Then $L$ is $\Kf$-stable.
\end{lemma}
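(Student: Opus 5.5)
The plan is to take $\class{K}$ to be the class of all $\Kf$-spaces validating $L$. Soundness and completeness of $L$ with respect to $\class{K}$ are immediate, since every normal modal logic is complete with respect to its modal spaces (by duality with modal algebras). So it remains to show that $\class{K}$ is $\Kf$-stable. That is: if $\X = (X,R) \models L$ and $f \colon \X \to \Y = (Y,Q)$ is a surjective stable map onto a $\Kf$-space $\Y$, then $\Y \models L$. Since $\Y \models \Kf$ by assumption, it suffices to show $\Y \models \gamma(F_i,\emp)$ for each $i$. I would argue by contraposition, transporting a refutation of $\gamma(F_i,\emp)$ on $\Y$ back along $f$ to $\X$, using the characterization \cite[Theorem 6.8]{stablecanonicalrules}: $\X \not\models \gamma(F,\emp)$ iff $F$ is a stable image of a topo-rooted closed upset of $\X$.

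So suppose $U \subseteq Y$ is a topo-rooted closed upset and $g \colon U \to F_i$ is a surjective stable map. The first step is to put $U' = f^{-1}(U)$. This set is closed by continuity, and it is an upset because $f$ preserves the relation: if $x \in U'$ and $xRx'$, then $f(x) Q f(x')$, so $f(x') \in U$ since $U$ is an upset. Next let $h = g \circ f|_{U'}$. It is continuous and relation-preserving, and it is surjective onto $F_i$ because $f$ is onto $Y$. Thus $F_i$ is a stable image of a closed upset of $\X$, but not yet of a \emph{topo-rooted} one.

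The main obstacle is recovering topo-rootedness, and this is exactly where the reflexive root $r$ of $F_i$ enters. I would first modify $h$ so that the preimage of the root is a downset. Let $D = h^{-1}(r) \cup (R^{-1}[h^{-1}(r)] \cap U')$. This set is clopen in $U'$ because $R^{-1}$ of a clopen set is clopen. Define $h'(y) = r$ for $y \in D$ and $h'(y) = h(y)$ otherwise. Then $h'$ is continuous and still surjective. It is still stable: if $yRz$ and $z \in D$, then $y \in D$ by transitivity, and $r$ is reflexive; and $r$ sees every point of $F_i$, since $F_i$ is rooted and transitive with a reflexive root.

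Then I would pick a point $x$ (or a suitable point in the closure) with $h'(x) = r$ that sees preimages of all the finitely many points of $F_i$, and restrict $h'$ to the closed upset $\{x\} \cup R[x]$, which is topo-rooted. To find such an $x$, I would exploit that $U$ is topo-rooted in $\Y$ and that $f$ is surjective, together with compactness. If this direct route fails, the fallback is algebraic: surjectivity of $f$ gives an injective Boolean map $f^{-1}$ between dual algebras satisfying $\Dia f^{-1}(a) \le f^{-1}(\Dia a)$. Composing it with the stable embedding of $F_i^+$ that witnesses the refutation would give a stable embedding into the dual algebra of $\X$. The reflexivity of the root is what ensures such a stable embedding, without any subdirect-irreducibility condition, already refutes $\gamma(F_i,\emp)$.
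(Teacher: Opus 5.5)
There is a genuine gap, and it lies in the overall strategy, not in a local detail. The class of \emph{all} $\Kf$-spaces validating $L$ is in general not $\Kf$-stable. So the point $x$ you hope to find in the last step need not exist, and the claim behind your algebraic fallback is false.

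Take $F=\{r,t\}$ with $rRr$, $rRt$ and $t$ irreflexive; this is a finite rooted $\Kf$-frame with reflexive root. Let $X=\{x_1,x_2\}$ with the empty relation, and let $Y=\{y,d\}$ with $yQd$ as its only relation.
\begin{itemize}
\item $X\models L$: its rooted upsets are singletons, which cannot map onto $F$.
\item The map $x_1\mapsto y$, $x_2\mapsto d$ is a surjective stable map onto the transitive frame $Y$.
\item $Y$ is itself a rooted upset mapping stably onto $F$ via $y\mapsto r$, $d\mapsto t$, so $Y\not\models\gamma(F,\emp)$.
\end{itemize}
Running your construction here gives $U'=X$, $D=\{x_1\}$ and $h'=h$, and $x_1$ sees nothing. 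Likewise, $F^+$ stably embeds into $X^+$ (dually to $h$) although $X\models\gamma(F,\emp)$. So a stable embedding into the whole algebra does not refute $\gamma(F,\emp)$, reflexive root or not. Moreover, a refutation on $\Y$ only gives an embedding into a homomorphic image of the dual algebra of $\Y$, not into that algebra itself. Your modification $h'$ also does no work. By stability, the preimage of the root cluster is already a downset, so $D=h^{-1}(r)$ when the root cluster is $\{r\}$. When the root cluster is a proper cluster, collapsing $D$ to $r$ can destroy surjectivity.

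The paper gives no proof of its own; it imports the lemma from \cite[Proposition 4.13]{stablemodallogic}. What you are missing is to shrink the class, and to use the reflexive root to extend $g$ forward rather than to pull $U$ back. Here is a route that works.
\begin{itemize}
\item Let $\class{K}$ be the class of \emph{rooted} $L$-spaces. For $\Kf$-spaces, $\{x\}\cup R[x]$ is already closed, so rooted and topo-rooted coincide.
\item $L$ is complete for $\class{K}$: a refutation at $x$ survives in the closed upset $\{x\}\cup R[x]$, which still validates $L$. Also, a stable image of a space rooted at $x_0$ is rooted at $f(x_0)$.
\item Given a topo-rooted closed upset $U$ of $\Y$ and a stable surjection $g\colon U\to F$, extend $g$ to all of $\Y$. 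Choose clopens $W_s$ partitioning $Y$ with $W_s\cap U=g^{-1}(s)$.
\item $U$ is the intersection of the clopen sets $W\cap\Box W$ over clopen $W\supseteq U$, and these are upsets by transitivity. By compactness there is a clopen upset $O\supseteq U$ with $O\cap W_s\cap Q^{-1}[W_t]=\emp$ whenever $t$ is not a successor of $s$ in $F$.
\item Send $O\cap W_s$ to $s$ and $Y\setminus O$ to $r$. This is a continuous stable surjection $\Y\to F$, because $O$ is an upset and $r$ is reflexive and sees every point of $F$.
\item Composing with $f$ gives a stable surjection from the rooted space $\X$ itself onto $F$, so $\X\not\models\gamma(F,\emp)$.
\end{itemize}
It is the rootedness of $\X$, built into the choice of $\class{K}$, that supplies the root your argument was looking for.
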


Next, we describe the decision problems in graph theory that we will use to show the $\coNP$-hardness of the validity problem. Let us recall some basics of graph theory. In this paper, graphs are finite, undirected, and simple. That is, a \emph{graph} is a pair $\GG = (V(\GG), E(\GG))$ of a finite set $V(\GG)$ of \emph{vertices} and a set $E(\GG)$ of \emph{edges} connecting two distinct vertices; between any two distinct vertices there is at most one edge. If $e$ is an edge connecting two vertices $u$ and $v$ in $\GG$, we denote it by $uv$ or $vu$. A graph is \emph{bipartite} if it contains no odd cycles, or equivalently, its vertex set can be partitioned into two disjoint sets such that every edge connects a vertex from one set to a vertex from the other set. Such partitions are called \emph{bipartitions}. For $u, v \in V(\GG)$, the \emph{distance} between $u$ and $v$ is the length of the shortest path from $u$ to $v$ in $\GG$ if such a path exists, and is $\infty$ otherwise. The \emph{diameter} of $\GG$ is the maximum distance between any two vertices in $\GG$. A graph is \emph{connected} if there is a path between any two vertices, namely, it has a finite diameter. 

A \emph{homomorphism} from a graph $\GG$ to a graph $\HH$ is a function $f: V(\GG) \to V(\HH)$ such that $uv \in E(\GG)$ implies $f(u)f(v) \in E(\HH)$ for all $u, v \in V(\GG)$. A homomorphism $f: V(\GG) \to V(\HH)$ is \emph{surjective} if $f(V(\GG)) = V(\HH)$. The \emph{surjective $H$-coloring} problem for a graph $\HH$, denoted \SurCol{$\HH$}, is defined as follows. Such problems are studied under different names in the literature, such as \emph{surjective homomorphism problems} and \emph{vertex-compaction problems}. We refer to \cite{bodirskyComplexitySurjectiveHomomorphism2012} for a comprehensive survey; however, note that this survey was written and published before the recent progress in \cite{vikasComputationalComplexityGraph2017,camposColoringProblemsBipartite2021} that we will use in the proofs.

\begin{definition}
    For a fixed graph $\HH$, the problem \SurCol{$\HH$} takes as input a graph $\GG$ and asks whether there is a surjective homomorphism from $\GG$ onto $\HH$.
\end{definition}

Note that the input size of \SurCol{$\HH$} is measured by the number of vertices and edges in $\GG$.

Now we are ready to prove the main result of this section (\Cref{Thm K4-stable non-elem}). Let us outline the proof. We will introduce a construction that transforms a bipartite graph $\HH$ into a finite rooted $\Kf$-frame $F_\HH^*$ with a reflexive root. Furthermore, we provide a reduction from the problem \SurCol{$\HH$}, when restricting the input to connected graphs, to the complement problem of \Val{$\Kf + \gamma(F_\HH^*, \emp)$}. Then, using the existence of bipartite graphs $\HH$ such that \SurCol{$\HH$} is $\NP$-complete for connected graphs \cite{vikasComputationalComplexityGraph2017,camposColoringProblemsBipartite2021}, we show that there are finite rooted $\Kf$-frames $F_\HH^*$ with a reflexive root such that \Val{$\Kf + \gamma(F_\HH^*, \emp)$} is $\coNP$-complete. Finally, these logics $\Kf + \gamma(F_\HH^*, \emp)$ are $\Kf$-stable by \Cref{Lem K4-stable}, and are not elementary by the contraposition of \Cref{Thm elementary in P}, assuming $\P \neq \NP$.

For a bipartite graph $\HH$, we can construct a Kripke frame $F_\HH$ by orienting its edges based on a bipartition and making every vertex reflexive. More precisely, for a bipartition $V(\HH) = H_0 \cup H_1$ of $\HH$, let $F_\HH = (H_0 \cup H_1, R)$, where $R = \{(x, y) : x \in H_0, y \in H_1, xy \in E(\HH)\} \cup \{(x, x) : x \in H_0 \cup H_1\}$. We call $F_\HH$ a \emph{reflexive bipartition frame} of $\HH$. See \Cref{Fig C6 FC6} for an illustration of the reflexive bipartition frame $F_{\CC_6}$ of the 6-cycle $\CC_6$; in this case, the reflexive bipartition frame is unique up to isomorphism. Note that a reflexive bipartition frame $F_\HH$ is not rooted in general.

\begin{figure}[htb]
    \centering
    \begin{tikzpicture}[
    dot/.style={circle, fill, inner sep=1.5pt}
]

    \def\R{1}
    \begin{scope}[shift={(-2,2)}, rotate=30]
        \foreach \i in {1,...,6}{
            \coordinate (C6\i) at ({60*\i}:\R);
        }
    \end{scope}

    \draw (C61) -- (C62) -- (C63) -- (C64) -- (C65) -- (C66) -- cycle;

    \foreach \i in {1,...,6}{
        \node[dot] at (C6\i) {};
    }

    \node at (-2,0) {$\CC_6$};

    \node (f11) at (1,3) {$\circ$};
    \node (f12) at (2,3) {$\circ$};
    \node (f13) at (3,3) {$\circ$};

    \node (f01) at (1,1) {$\circ$};
    \node (f02) at (2,1) {$\circ$};
    \node (f03) at (3,1) {$\circ$};

    \node at (2,0) {$F_{\CC_6}$};

    \draw[->] (f01) -- (f11);
    \draw[->] (f01) -- (f12);

    \draw[->] (f02) -- (f11);
    \draw[->] (f02) -- (f13);

    \draw[->] (f03) -- (f12);
    \draw[->] (f03) -- (f13);

\end{tikzpicture}
    \caption{The 6-cycle $\CC_6$ and the reflexive bipartition frame $F_{\CC_6}$.}
    \label{Fig C6 FC6}
\end{figure}

\begin{lemma} \label{Lem graph to frame}
    Let $\HH$ be a bipartite graph such that \SurCol{$\HH$} is $\NP$-complete for connected graphs. Let $F_\HH = (H_0 \cup H_1, R_\HH)$ be the reflexive bipartition frame of $\HH$ based on a bipartition $V(\HH) = H_0 \cup H_1$ of $\HH$, and 
    \begin{align*}
        F_\HH^* &= (H_0 \cup H_1 \cup \{r_H, a_H, b_H, c_H\}, R_\HH^*) \text{, where} \\
        R_\HH^* &= R_\HH \cup \{(r_H, x): x \in H_0 \cup H_1 \cup \{r_H, a_H, b_H, c_H\}\} \\
        &\qquad \quad \cup \{(a_H, x): x \in H_0 \cup H_1 \cup \{c_H\}\} \\
        &\qquad \quad \cup \{(b_H, x): x \in H_1\} \\
        &\qquad \quad \cup \{(x, c_H): x \in H_0\}.
    \end{align*}
    (See \Cref{Fig FC6*} for an illustration of $F_\HH^*$ where $\HH = \CC_6$, the 6-cycle.)

    Then, \Val{$\Kf + \gamma(F_\HH^*, \emp)$} is $\coNP$-complete.
\end{lemma}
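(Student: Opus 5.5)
Membership in $\coNP$ is routine, so the work is a polynomial reduction from \SurCol{$\HH$} on connected graphs to the complement of \Val{$\Kf + \gamma(F_\HH^*, \emp)$}, combined with \Cref{Prop stable formulas}.

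\emph{Membership.} A finite frame $G$ fails to validate $\Kf + \gamma(F_\HH^*, \emp)$ iff one of two things happens. Either $G$ is not transitive, which is checkable in polynomial time. Or, by \Cref{Prop stable formulas}, there are a point $w$ and a surjective relation-preserving map from the upset generated by $w$ onto $F_\HH^*$. Since $F_\HH^*$ is fixed, this second condition is an $\NP$ certificate: guess $w$ and the map, then verify surjectivity and relation preservation in polynomial time. Hence \Val{$\Kf + \gamma(F_\HH^*, \emp)$} is in $\coNP$.

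\emph{Reduction.} I may assume $\HH$ is connected. Indeed, a homomorphic image of a connected graph lies in one component, so otherwise \SurCol{$\HH$} on connected inputs would be trivial and not $\NP$-complete. Thus the bipartition $H_0 \cup H_1$ of $\HH$ is unique up to swapping.

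Given a connected input $\GG$, first test bipartiteness in polynomial time. If $\GG$ is not bipartite, it has no homomorphism to the bipartite $\HH$, so I output a fixed transitive frame that validates the formula, for instance a single irreflexive point, which is too small to map onto $F_\HH^*$. If $\GG$ is bipartite, it has a bipartition $G_0 \cup G_1$, unique up to swap, and any homomorphism $\GG \to \HH$ sends $G_0$ entirely into $H_0$ or entirely into $H_1$. To absorb this ambiguity, I output the disjoint union $G' = \GG^* \sqcup \GG^{*\prime}$. Here $\GG^*$ is the frame built from $\GG$ by exactly the recipe defining $F_\HH^*$, using $G_0 \cup G_1$, and $\GG^{*\prime}$ is the same construction with the roles of $G_0$ and $G_1$ swapped. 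I would check that this recipe always yields a transitive frame: every point of $G_0 \cup G_1$ sees only itself, its $G_1$-neighbours and $c$, while $r$, $a$ and $b$ see upward-closed sets. Every rooted upset of $G'$ lies inside one of the two copies, so it suffices to prove:
\[F_\HH^* \text{ is a stable image of a rooted upset of } \GG^* \iff \text{there is a surjective homomorphism } \GG \to \HH \text{ with } f[G_i] \subseteq H_i .\]

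For ($\Leftarrow$), extend $f$ by the identity on $r, a, b, c$ and check relation preservation clause by clause. Edges go $G_0 \to G_1$, so they land on edges $H_0 \to H_1$. Reflexive loops go to reflexive points, and the clauses for $r, a, b, c$ coincide in both frames.

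For ($\Rightarrow$), let $g$ be the stable surjection. A reflexive point must be sent to a reflexive point, so the irreflexive points $a_H, b_H, c_H$ can only have preimages among the irreflexive $a, b, c$. Surjectivity then makes $g$ a bijection on $\{a, b, c\}$, and the fact that $a R c$ is the only relation among them forces $g$ to fix $a$, $b$ and $c$. Only $r$ sees $a$, so the upset is all of $\GG^*$, and since $g(r)$ must see $a_H$, we get $g(r) = r_H$. For $y \in G_1$, $b R y$ forces $g(y) \in H_1$. For $x \in G_0$, $x R c$ forces $g(x) \in H_0 \cup \{r_H\}$. If no point of $G_0$ is sent to $r_H$, the restriction of $g$ to $G_0 \cup G_1$ is a side-respecting homomorphism onto $H_0 \cup H_1$, as required.

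The main obstacle is excluding $g(x) = r_H$ for $x \in G_0$, since $r_H$ sees everything and so relation preservation imposes no constraint there. My first attempt would be to repair $g$ at such $x$. Surjectivity onto $H_0$ is already witnessed by other points of $G_0$, so I only need to redefine $g(x)$ as an $H_0$-vertex adjacent to all the images of $x$'s neighbours; I would look for this using connectivity of $\GG$ and $\HH$ together with properties of the specific $\HH$ from the cited hardness results. If such a vertex cannot be guaranteed in general, I would modify the reduction so this case cannot arise: for example, attach to each point of $G_0$ an extra successor structure (a gadget) that $r_H$'s image cannot be compatible with, while keeping the construction polynomial and transitive, and then re-verify both directions of the displayed equivalence.
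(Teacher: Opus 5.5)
\textbf{Gap: the ($\Rightarrow$) direction is left unfinished.} Your setup is correct and follows the paper's proof: membership in $\coNP$, the disjoint union of the two orientations, restriction to a single copy, pinning $g$ on $\{a,b,c\}$, $g(r)=r_H$, and $g[G_1]\subseteq H_1$. Your argument that $aRc$ is the only relation among $a_H,b_H,c_H$ is a valid, slightly slicker substitute for the paper's dead-end argument. However, the proof stops exactly at the step you call the main obstacle, and the reason you give for that obstacle is mistaken. Relation preservation constrains $g(x)$ through the predecessors of $x$ as well as its successors, and $r_H$ is seen by no point other than itself. By construction $a\,R\,x$ for every $x\in G_0$, and you have already shown $g(a)=a_H$. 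Hence
\[
g(x)\in R_\HH^*[a_H]=H_0\cup H_1\cup\{c_H\},
\]
which excludes $r_H$. Intersecting with your $g(x)\in H_0\cup\{r_H\}$ gives $g[G_0]\subseteq H_0$. Restricting $g$ to $G_0\cup G_1$ then gives a surjective homomorphism $\GG\to\HH$ with $g[G_i]\subseteq H_i$. This one-line use of $a\,R\,x$ is exactly how the paper closes the argument.

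As written, the proposal therefore does not establish the displayed equivalence, because neither fallback is carried out. The first fallback would not work in general. Redefining $g(x)$ requires a vertex of $H_0$ adjacent to the images of all neighbours of $x$, and such a vertex need not exist; for instance, in $\CC_6$ no vertex is adjacent to three others. No repair or gadget is needed: once you insert the $a\,R\,x$ step, your argument is complete.
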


\begin{figure}[htb]
    \centering
    \begin{tikzpicture}[
    dot/.style={circle, fill, inner sep=1.5pt}
]

    \node[label={left:$c_H$}] (cH) at (0,4) {$\bullet$};
    \node (h11) at (2,4) {$\circ$};
    \node (h12) at (3,4) {$\circ$};
    \node (h13) at (4,4) {$\circ$};

    \node (h01) at (2,2) {$\circ$};
    \node (h02) at (3,2) {$\circ$};
    \node (h03) at (4,2) {$\circ$};
    
    \node[label={right:$b_H$}] (bH) at (6,2) {$\bullet$};
    
    \node[label={left:$a_H$}] (aH) at (3,1) {$\bullet$};
    
    \node[label={left:$r_H$}] (rH) at (3,0) {$\circ$};

    \node at (3,-1) {$F_{\CC_6}^*$};

    \draw[->] (h01) -- (cH);
    \draw[->] (h01) -- (h11);
    \draw[->] (h01) -- (h12);

    \draw[->] (h02) -- (cH);
    \draw[->] (h02) -- (h11);
    \draw[->] (h02) -- (h13);

    \draw[->] (h03) -- (cH);
    \draw[->] (h03) -- (h12);
    \draw[->] (h03) -- (h13);

    \draw[->] (bH) -- (h11);
    \draw[->] (bH) -- (h12);
    \draw[->] (bH) -- (h13);

    \draw[->] (aH) -- (h01);
    \draw[->] (aH) -- (h02);
    \draw[->] (aH) -- (h03);

    \draw[->] (rH) -- (aH);
    \draw[->] (rH) -- (bH);

\end{tikzpicture}
    \caption{The Kripke frame $F_{\CC_6}^*$. Relations obtained by transitivity are omitted.}
    \label{Fig FC6*}
\end{figure}

\begin{proof}
    First, note that the Kripke frame $F_\HH^*$ is a rooted $\Kf$-frame with root $r_H$, so the stable formula $\gamma(F_\HH^*, \emp)$ is well-defined. For any finite $\Kf$-frame $F$, by \Cref{Prop stable formulas}, we have $F \not\models \gamma(F_\HH^*, \emp)$ iff $F_\HH^*$ is a stable image of a rooted upset of $F$. Since deciding whether a finite Kripke frame validates $\Kf$ (i.e., is transitive) is clearly in $\P$, it suffices to show that, given a finite $\Kf$-frame $F$, the problem of deciding whether the latter condition holds is $\NP$-complete. Let us call this problem \sc{Q} in the current proof. It is clearly in $\NP$ with a certificate being a rooted upset of $F$ and a surjective stable map from it to $F_\HH^*$.

    To show $\NP$-hardness, we reduce \SurCol{$\HH$} to \sc{Q}. Since only bipartite graphs admit surjective homomorphisms onto $\HH$, and deciding whether a graph is bipartite is in $\P$, we may restrict to connected bipartite instances of \SurCol{$\HH$}. Let $\GG$ be a connected bipartite graph. We may further assume that $|V(\GG)| \geq 2$, which will be used in the proof of \Cref{Claim}. Then, up to reversing, $\GG$ has a unique bipartition $V(\GG) = G_0 \cup G_1$. We construct a finite Kripke frame $F_\GG$ as follows. First, let $F_{\GG,0}$ be the reflexive bipartition frame of $\GG$ based on the bipartition $V(\GG) = G_0 \cup G_1$ and $F_{\GG,1}$ be the reflexive bipartition frame of $\GG$ based on the reversed bipartition $V(\GG) = G_1 \cup G_0$. Then, let $F_{\GG,0}^*$ and $F_{\GG,1}^*$ be constructed similarly to $F_\HH^*$; more specifically, for each $i = 0, 1$, let 
    \begin{align*}
        F_{\GG,i}^* &= (G_0 \cup G_1 \cup \{r_{G,i}, a_{G,i}, b_{G,i}, c_{G,i}\}, R_{\GG,i}^*) \text{, where} \\
        R_{\GG,i}^* &= R_{\GG,i} \cup \{(r_{G,i}, x): x \in G_0 \cup G_1 \cup \{r_{G,i}, a_{G,i}, b_{G,i}, c_{G,i}\}\} \\
        &\qquad \quad \cup \{(a_{G,i}, x): x \in G_0 \cup G_1 \cup \{c_{G,i}\}\} \\
        &\qquad \quad \cup \{(b_{G,i}, x): x \in G_{1-i}\} \\
        &\qquad \quad \cup \{(x, c_{G,i}): x \in G_i\}.
    \end{align*}
    Finally, let $F_\GG$ be the disjoint union of $F_{\GG,0}^*$ and $F_{\GG,1}^*$. By the construction, both $F_{\GG,0}^*$ and $F_{\GG,1}^*$ are transitive, and so $F_\GG$ is transitive, meaning that the reduction is well-defined. Moreover, since a bipartition of $\GG$ can be computed in polynomial time, $F_\GG$ is computable from $\GG$ in polynomial time. The following claim shows the correctness of the reduction.

    \begin{claim} \label{Claim}
        There is a surjective homomorphism from $\GG$ to $\HH$ iff $F_\HH^*$ is a stable image of a rooted upset of $F_\GG$.
    \end{claim}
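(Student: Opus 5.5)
We prove the two directions separately. The forward direction uses connectedness of $\GG$ to pick the correct copy $F_{\GG,i}^*$. The backward direction uses the auxiliary points $r,a,b,c$ to pin down the stable map on them, which then forces the map restricted to $V(\GG)$ to be a surjective homomorphism respecting the bipartitions.

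\emph{($\Rightarrow$)} Let $f\colon V(\GG)\to V(\HH)$ be a surjective homomorphism.
\begin{enumerate}
\item Every edge of $\GG$ is sent to an edge of $\HH$, which joins $H_0$ and $H_1$. Since $\GG$ is connected, parity along paths is preserved. Hence there is an $i\in\{0,1\}$ with $f[G_i]\subseteq H_0$ and $f[G_{1-i}]\subseteq H_1$.
\item Take the rooted upset $F_{\GG,i}^*$ of $F_\GG$, with root $r_{G,i}$. Define $g$ to be $f$ on $G_0\cup G_1$, and set $g(r_{G,i})=r_H$, $g(a_{G,i})=a_H$, $g(b_{G,i})=b_H$, $g(c_{G,i})=c_H$.
\item Surjectivity of $g$ follows from surjectivity of $f$.
\item For relation preservation we check each kind of pair in $R_{\GG,i}^*$:
 \begin{itemize}
 \item reflexive loops on vertices go to reflexive points of $H_0\cup H_1$;
 \item an edge $u\in G_i$, $v\in G_{1-i}$ goes to an edge from $H_0$ to $H_1$, which lies in $R_\HH$;
 \item pairs $(x,c_{G,i})$ with $x\in G_i$ go into $H_0\times\{c_H\}$;
 \item pairs from $b_{G,i}$ into $G_{1-i}$ go into $\{b_H\}\times H_1$;
 \item pairs from $a_{G,i}$ into $V(\GG)\cup\{c_{G,i}\}$ go to pairs from $a_H$ into $V(\HH)\cup\{c_H\}$;
 \item everything out of $r_{G,i}$ goes to pairs out of $r_H$, and $r_H$ sees all points.
 \end{itemize}
\end{enumerate}

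\emph{($\Leftarrow$)} Let $U$ be a rooted upset of $F_\GG$ and $g\colon U\to F_\HH^*$ a surjective stable map.
\begin{enumerate}
\item Since $U$ is rooted, it lies inside a single component $F_{\GG,i}^*$.
\item Reflexive points must go to reflexive points. So the preimages of the irreflexive points $a_H,b_H,c_H$ lie among the irreflexive points $a_{G,i},b_{G,i},c_{G,i}$.
\item A preimage of $c_H$ can have no successors, because $c_H$ has none. Here $b_{G,i}$ sees $G_{1-i}\neq\emp$, using that $\GG$ is connected with $|V(\GG)|\geq 2$, and $a_{G,i}$ sees points. Hence $c_{G,i}\in U$ and $g(c_{G,i})=c_H$.
\item Therefore $\{g(a_{G,i}),g(b_{G,i})\}=\{a_H,b_H\}$. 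Since $a_{G,i}$ sees $c_{G,i}$ while $b_H$ does not see $c_H$, we get $g(a_{G,i})=a_H$ and $g(b_{G,i})=b_H$.
\item Both $a_{G,i}$ and $b_{G,i}$ are in $U$, and the only point seeing both is $r_{G,i}$. So $U=F_{\GG,i}^*$. Moreover $g(r_{G,i})=r_H$, because only $r_H$ sees $a_H$.
\end{enumerate}

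Now consider $v\in V(\GG)$.
\begin{enumerate}
\item Since $a_{G,i}Rv$, the image $g(v)$ is a successor of $a_H$. Since $v$ is reflexive, so is $g(v)$. Together these give $g(v)\in H_0\cup H_1$.
\item The points $r,a,b,c$ are mapped outside $H_0\cup H_1$. So surjectivity of $g$ gives $g[V(\GG)]=V(\HH)$.
\item Points of $G_{1-i}$ are seen by $b_{G,i}$, so they map into $H_1$. Points of $G_i$ see $c_{G,i}$, so they map to predecessors of $c_H$ in $H_0\cup H_1$, namely into $H_0$.
\item For an edge $uv$ with $u\in G_i$ and $v\in G_{1-i}$, we have $uRv$, hence $g(u)R_\HH g(v)$ with $g(u)\in H_0$ and $g(v)\in H_1$. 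Such pairs are edges of $\HH$.
\end{enumerate}
So $g|_{V(\GG)}$ is a surjective homomorphism from $\GG$ onto $\HH$.

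The main obstacle is the backward direction. A stable map is only relation-preserving, so it could a priori collapse points, and the proof needs the rigidity argument above: the irreflexivity of $a_H,b_H,c_H$ and the dead end $c_H$ force the images of the auxiliary points. That rigidity is exactly what prevents the vertices of $\GG$ from being mapped onto $r_H$ or collapsed within $H_0\cup H_1$ in a way that would break the homomorphism property.
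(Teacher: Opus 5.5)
Your proof is correct and follows essentially the same route as the paper. In the forward direction, connectedness selects the right copy $F_{\GG,i}^*$. In the backward direction, irreflexivity, the dead end $c_H$, and the fact that $b_H$ does not see $c_H$ pin down the images of $a_{G,i}, b_{G,i}, c_{G,i}$, which then force $G_i$ into $H_0$, $G_{1-i}$ into $H_1$, and the homomorphism property. The only difference is cosmetic: you obtain $U = F_{\GG,i}^*$ and $g(r_{G,i}) = r_H$ after fixing the irreflexive points (using that $r_H$ is the only predecessor of $a_H$), whereas the paper gets them first from the count of irreflexive points and rootedness.
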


    \begin{proof} 
        Suppose first that there is a surjective homomorphism $h: \GG \to \HH$. Then, since $\GG$ is connected, there is a unique bipartition of $\GG$ up to reversing, so we have either $h[G_0] = H_0$ and $h[G_1] = H_1$, or $h[G_0] = H_1$ and $h[G_1] = H_0$. We construct a surjective stable map from $F_{\GG,0}^*$ to $F_\HH^*$ in the first case; the same argument yields a surjective stable map from $F_{\GG,1}^*$ to $F_\HH^*$ in the second case. Assume that $h[G_0] = H_0$ and $h[G_1] = H_1$. Define $f: F_{\GG,0}^* \to F_\HH^*$ by $f(r_{G,0}) = r_H$, $f(a_{G,0}) = a_H$, $f(b_{G,0}) = b_H$, $f(c_{G,0}) = c_H$, and $f(x) = h(x)$ for all $x \in G_0 \cup G_1$. It is clear that $f$ is surjective since $h$ is surjective. We show that $f$ is stable. It follows directly from the construction that all relations involving the four additional points are preserved by $f$. The only non-trivial case is when $x R_{\GG,0}^* y$ for some distinct $x, y \in G_0 \cup G_1$. In this case, by the definition of $R_{\GG,0}^*$, we have $x R_{\GG,0} y$, and thus $xy \in E(\GG)$, which implies $h(x)h(y) \in E(\HH)$ since $h$ is a homomorphism. This, by the definition of $f$ and $R_\HH^*$, implies that $f(x) R_\HH^* f(y)$. Thus, $f$ is stable. Since $F_{\GG,0}^*$ is a rooted upset of $F_\GG$, we have that $F_\HH^*$ is a stable image of a rooted upset of $F_\GG$.

        Conversely, suppose that $U$ is a rooted upset of $F_\GG$ and $f: U \to F_\HH^*$ is a surjective stable map. Since $F_\GG$ is the disjoint union of $F_{\GG,0}^*$ and $F_{\GG,1}^*$, the rooted upset $U$ must be contained in $F_{\GG,0}^*$ or $F_{\GG,1}^*$. Since a stable map cannot map a reflexive point to an irreflexive point, each preimage $f^{-1}(a_H)$, $f^{-1}(b_H)$, and $f^{-1}(c_H)$ is a non-empty set of irreflexive points. But $F_{\GG,0}^*$ and $F_{\GG,1}^*$ only have 3 irreflexive points each, so $U$ must contain all the irreflexive points of either $F_{\GG,0}^*$ or $F_{\GG,1}^*$, which implies that $U = F_{\GG,0}^*$ or $U = F_{\GG,1}^*$ since $U$ is rooted. We construct a surjective homomorphism from $\GG$ to $\HH$ in the first case; the same argument works in the second case. 
        
        Assume that $U = F_{\GG,0}^*$. Since $r_{G,0}$ is the only root of $F_{\GG,0}^*$, we have $f(r_{G,0}) = r_H$. The above argument shows that $f[\{a_{G,0}, b_{G,0}, c_{G,0}\}] = \{a_H, b_H, c_H\}$. Since $|V(\GG)| \geq 2$ by assumption, both $G_0$ and $G_1$ are non-empty. So, neither $f(a_{G,0})$ nor $f(b_{G,0})$ is a dead end in $F_\HH^*$ since $f$ is stable, and thus $f(c_{G,0}) = c_H$. Similarly, $f(a_{G,0}) \neq b_H$ since $a_{G,0} R_{\GG,0}^* c_{G,0}$ and $\lnot b_H R_\HH^* c_H$, and thus $f(a_{G,0}) = a_H$. It follows that $f(b_{G,0}) = b_H$. Moreover, for any $x \in G_0$, we have $f(x) \in H_0 \cup H_1 \cup \{c_H\}$ since $a_{G,0} R_{\GG,0}^* x$ and $f(a_{G,0}) = a_H$, but $f(x) \neq c_H$ since $x$ is reflexive while $c_H$ is irreflexive, and $f(x) \notin H_1$ since $x R_{\GG,0}^* c_{G,0}$ and $y R_\HH^* c_H$ for no $y \in H_1$, thus $f[G_0] \subseteq H_0$. Similarly, since $b_{G,0} R_{\GG,0}^* x$ for all $x \in G_1$ and $b_H R_\HH^* y$ for no $y \in H_0$, we have $f[G_1] \subseteq H_1$. Thus, $f[G_0] = H_0$ and $f[G_1] = H_1$ since $f$ is surjective. 

        Now define $h: \GG \to \HH$ by $h(x) = f(x)$ for all $x \in V(\GG)$. It is surjective because $f[G_0] = H_0$ and $f[G_1] = H_1$. Let $x, y \in V(\GG)$ such that $xy \in E(\GG)$. Without loss of generality, we may assume that $x \in G_0$ and $y \in G_1$. Then, $x R_{\GG,0}^* y$ by the definition of $R_{\GG,0}^*$, and thus $f(x) \in H_0$ by $f[G_0] = H_0$, $f(y) \in H_1$ by $f[G_1] = H_1$, and $f(x) R_\HH^* f(y)$ since $f$ is stable. By the definition of $R_\HH^*$, we have $h(x)h(y) = f(x)f(y) \in E(\HH)$. Thus, $h$ is a surjective homomorphism from $\GG$ to $\HH$.
    \end{proof}

    Since $F_\GG$ is computable from $\GG$ in polynomial time, the $\NP$-hardness of \sc{Q} follows from \Cref{Claim} and the $\NP$-hardness of \SurCol{$\HH$} for connected graphs. Therefore, \sc{Q} is $\NP$-complete, and thus \Val{$\Kf + \gamma(F_\HH^*, \emp)$} is $\coNP$-complete.
\end{proof}

\begin{lemma} \label{Lem H to non-elem}
    Let $\HH$ be a bipartite graph such that \SurCol{$\HH$} is $\NP$-complete for connected graphs. Let $F_\HH^*$ be the finite rooted $\Kf$-frame defined as in \Cref{Lem graph to frame}. Then, $\Kf + \gamma(F_\HH^*, \emp)$ is not elementary, assuming $\P \neq \NP$.
\end{lemma}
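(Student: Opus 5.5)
The proof is a contraposition argument that combines \Cref{Lem graph to frame} with \Cref{Thm elementary in P}. Assume $\P \neq \NP$, and suppose for contradiction that $L = \Kf + \gamma(F_\HH^*, \emp)$ is elementary.

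First, I would check that $L$ is finitely axiomatizable. $\Kf$ is axiomatized over $\K$ by the single formula $\Box p \to \Box\Box p$. So $L = \K + \big((\Box p \to \Box \Box p) \land \gamma(F_\HH^*, \emp)\big)$, which is of the form $\K + \phi$ for one formula $\phi$. \Cref{Thm elementary in P} then applies and gives that \Val{$L$} is in $\P$.

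On the other hand, \Cref{Lem graph to frame} states that \Val{$L$} is $\coNP$-complete. Every problem in $\coNP$ would then reduce in polynomial time to a problem in $\P$, so $\coNP \subseteq \P$. Since $\P$ is closed under complement, this gives $\NP \subseteq \P$, i.e.\ $\P = \NP$, contradicting the assumption. Hence $L$ is not elementary.

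There is no real obstacle here. The only point needing care is that the input conventions agree. \Val{$L$} in \Cref{Thm elementary in P} accepts arbitrary finite frames as input. The hardness in \Cref{Lem graph to frame} is stated for the same problem \Val{$L$} on arbitrary finite frames, with transitivity checkable in $\P$. So the two statements concern the same decision problem with the same size measure, and the contradiction goes through. If desired, one can also add that $L$ is $\Kf$-stable by \Cref{Lem K4-stable}, since $r_H$ is reflexive; this is not needed for non-elementarity itself.
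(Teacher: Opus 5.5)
Your proof is correct and follows the paper's argument exactly. Finite axiomatizability plus \Cref{Thm elementary in P} puts \Val{$L$} in $\P$, while \Cref{Lem graph to frame} makes it $\coNP$-complete, which forces $\P = \coNP$ and hence $\P = \NP$. Your extra details (writing $L$ as $\K + \phi$ for a single conjunction $\phi$, and the closure-under-complement step) only spell out what the paper treats as clear.
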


\begin{proof}
    Suppose that $\Kf + \gamma(F_\HH^*, \emp)$ is elementary. It is clearly finitely axiomatizable. Then, by \Cref{Thm elementary in P} and \Cref{Lem graph to frame}, we have $\P = \coNP$, which would imply $\P = \NP$. Therefore, the logic $\Kf + \gamma(F_\HH^*, \emp)$ is not elementary, assuming $\P \neq \NP$.
\end{proof}

\begin{figure}[htb]
    \centering
    \begin{tikzpicture}[
    dot/.style={circle, fill, inner sep=1.5pt}
]
    \begin{scope}
        \foreach \i in {1,...,4}{
            \coordinate (K0\i) at (\i,0);
            \coordinate (K1\i) at (\i,2);
        }

        \foreach \i in {1,...,4}{
            \foreach \j in {1,...,4}{
                \draw (K0\i) -- (K1\j);
            }
        }

        \foreach \i in {1,...,4}{
            \node[dot] at (K0\i) {};
            \node[dot] at (K1\i) {};
        }

        \node at (2.5,-1) {$\KK_{4,4}$};
    \end{scope}

    \begin{scope}[xshift=5cm]
        \foreach \i in {1,...,4}{
            \coordinate (M0\i) at (\i,0);
            \coordinate (M1\i) at (\i,2);
        }

        \foreach \i in {1,...,4}{
            \foreach \j in {1,...,4}{
                \ifnum\i=\j\else
                    \draw (M0\i) -- (M1\j);
                \fi
            }
        }

        \foreach \i in {1,...,4}{
            \node[dot] at (M0\i) {};
            \node[dot] at (M1\i) {};
        }

        \node at (2.5,-1) {$\MM_4$};
    \end{scope}
\end{tikzpicture}
    \caption{The bipartite graphs $\KK_{4,4}$ and $\MM_4$.}
    \label{Fig K44 M4}
\end{figure}

\begin{theorem} \label{Thm K4-stable non-elem}
    There are infinitely many finitely axiomatizable $\Kf$-stable logics that are not elementary, assuming $\P \neq \NP$.
\end{theorem}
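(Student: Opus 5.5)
The plan is to combine \Cref{Lem K4-stable} and \Cref{Lem H to non-elem} with known hardness results for surjective colouring of bipartite graphs. For each bipartite graph $\HH$ such that \SurCol{$\HH$} is $\NP$-complete for connected graphs, the logic $L_\HH = \Kf + \gamma(F_\HH^*, \emp)$ has the required properties. It is finitely axiomatizable by definition. It is $\Kf$-stable by \Cref{Lem K4-stable}, since $F_\HH^*$ is a finite rooted $\Kf$-frame whose root $r_H$ is reflexive. Finally, it is not elementary, assuming $\P \neq \NP$, by \Cref{Lem H to non-elem}. So it remains to produce infinitely many such $\HH$ that yield pairwise distinct logics.

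For the supply of graphs I would use the results cited earlier, \cite{vikasComputationalComplexityGraph2017,camposColoringProblemsBipartite2021}, which the figure suggests are stated for the families $\KK_{4,4}$-like and $\MM_k$-like graphs. Concretely, I expect these papers to show that \SurCol{$\HH$} is $\NP$-complete for connected inputs when $\HH$ ranges over an infinite family: for instance $\MM_k$ (the complete bipartite graph $\KK_{k,k}$ minus a perfect matching) for all $k \geq 4$, or graphs built from $\KK_{4,4}$. The step I expect to be the main obstacle is pinning down exactly which infinite family the literature covers, and checking that hardness holds for \emph{connected} input graphs. If only a few base graphs are known to be hard, my fallback is to obtain infinitely many by a reduction. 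I would take a base graph $\HH$ with \SurCol{$\HH$} hard and build larger graphs $\HH_n$, for instance by attaching pendant paths of varying lengths. I would then show that \SurCol{$\HH$} reduces to \SurCol{$\HH_n$} by attaching a corresponding gadget to the input graph. This reduction would need real verification.

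Distinctness is the other point to check. The logics $L_\HH$ are distinct as long as the frames $F_\HH^*$ are pairwise non-isomorphic, which holds when the graphs $\HH$ in the family have distinct numbers of vertices. To see this, suppose $|F_{\HH}^*| < |F_{\HH'}^*|$. Then $F_{\HH}^* \models \gamma(F_{\HH'}^*, \emp)$, because by \Cref{Prop stable formulas} refuting it would require a surjection from a subset of $F_\HH^*$ onto the strictly larger $F_{\HH'}^*$. On the other hand, $F_\HH^* \not\models \gamma(F_\HH^*, \emp)$, using the identity map on the whole frame, which is a rooted upset of itself. Hence $L_{\HH'} \not\subseteq L_\HH$, so $L_\HH \neq L_{\HH'}$. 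Together these steps give infinitely many finitely axiomatizable, $\Kf$-stable, non-elementary logics, assuming $\P \neq \NP$.
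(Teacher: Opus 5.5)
Your proposal is correct and follows the paper's proof: the paper also uses the family $\MM_k$ for $k \geq 4$, which Campos et al.\ show is hard already for inputs of diameter $3$ and hence for connected inputs, along with the even cycles $\CC_{2m+6}$ from Vikas, so your fallback construction is unnecessary; distinctness is obtained by the same cardinality argument. One small slip: your argument shows $\gamma(F_\HH^*, \emp) \in L_\HH \setminus L_{\HH'}$, i.e.\ $L_\HH \not\subseteq L_{\HH'}$ rather than $L_{\HH'} \not\subseteq L_\HH$, but this still gives $L_\HH \neq L_{\HH'}$.
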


\begin{proof}
    We begin by gathering examples of bipartite graphs to which \Cref{Lem H to non-elem} is applicable. Note that graphs of bounded diameter are connected. Vikas \cite{vikasComputationalComplexityGraph2017} showed that for the 6-cycle $\CC_6$, the problem \SurCol{$\CC_6$} is $\NP$-complete for graphs of diameter 4, and pointed out that the proof generalizes to even $k$-cycles for $k \geq 6$. Recall that the \emph{complete} bipartite graph $\KK_{k,k}$ is the bipartite graph with a bipartition into two sets of $k$ vertices each, and with an edge connecting every vertex in one set to every vertex in the other set. A \emph{perfect matching} in $\KK_{k,k}$ is a set of $k$ edges such that each vertex is connected by exactly one edge in the set. Let $\MM_k$ be the bipartite graph obtained from the complete bipartite graph $\KK_{k,k}$ by removing a perfect matching. See \Cref{Fig K44 M4} for an illustration of these graphs in the case $k=4$. Campos et al. \cite{camposColoringProblemsBipartite2021} showed that for the graph $\MM_k$ with $k \geq 4$, the problem \SurCol{$\MM_k$} is $\NP$-complete for graphs of diameter 3. Another example is the graph constructed in \cite[Proposition 5]{bodirskyComplexitySurjectiveHomomorphism2012}; the proposition only states the $\NP$-completeness, while it follows easily from the proof that the $\NP$-completeness holds even for connected graphs. We do not use the last example since we need infinitely many examples.
    
    Thus, there is an infinite set $\{\HH_m: m \in \omega\}$ of bipartite graphs such that \SurCol{$\HH_m$} is $\NP$-complete for connected graphs and $|V(\HH_m)| < |V(\HH_{m'})|$ for $m < m'$. For instance, we can take $\HH_m = \CC_{2m+6}$ or $\HH_m = \MM_{m+4}$. Then, assuming $\P \neq \NP$, the logic $\Kf + \gamma(F_{\HH_m}^*, \emp)$ is not elementary for all $m \in \omega$ by \Cref{Lem H to non-elem}, and they are $\Kf$-stable by \Cref{Lem K4-stable}. Moreover, they are pairwise distinct because for $m < m'$, clearly $F_{\HH_m}^* \not\models \gamma(F_{\HH_m}^*, \emp)$ since $F_{\HH_m}^*$ is a stable image of itself, while $F_{\HH_m}^* \models \gamma(F_{\HH_{m'}}^*, \emp)$ because $F_{\HH_{m'}}^*$ cannot be a stable image of a rooted upset of $F_{\HH_m}^*$ by a simple cardinality argument ($|F_{\HH_m}^*| < |F_{\HH_{m'}}^*|$). Therefore, there are infinitely many $\Kf$-stable logics that are not elementary, assuming $\P \neq \NP$.
\end{proof}

Recall the pre-transitive logics $\Kff{m+1}{1} = \K + \Box p \to \Box^{m+1} p$ for $m \geq 1$. These logics define the \emph{$(m+1, 1)$-transitivity}: $\forall x_0 \cdots x_{m+1} (x_0 R x_1 \land \cdots \land x_mRx_{m+1} \to x_0Rx_{m+1})$. The logic $\Kff{2}{1}$ is exactly $\Kf$. The theory of stable canonical formulas was generalized to these logics in \cite{takahashi2025stablecanonicalrulesformulas}, yielding the finite model property of $\Kff{m+1}{1}$-stable logics for $m \geq 1$. All $\Kff{m+1}{1}$-stable logics are axiomatizable by stable formulas $\gamma^m(F, \emp)$ designed for $\Kff{m+1}{1}$. Moreover, \Cref{Prop stable formulas} and \Cref{Lem K4-stable} have their analogues for $\Kff{m+1}{1}$ \cite[Theorem 5.5 and Lemma 5.15, respectively]{takahashi2025stablecanonicalrulesformulas}. Since the constructions $\HH \mapsto F_{\HH}^*$ and $\GG \mapsto F_\GG$ yield transitive frames, which are in particular $\Kff{m+1}{1}$-frames for all $m \geq 1$, the same argument in this section applies to these pre-transitive settings. It follows that, for $m \geq 1$, the logics $\Kff{m+1}{1} + \gamma^m(F_{\HH}^*, \emp)$ are $\Kff{m+1}{1}$-stable and not elementary, assuming $\P \neq \NP$, provided that $\HH$ satisfies the conditions of \Cref{Lem H to non-elem}. 

Not only are there infinitely many such logics $\Kff{m+1}{1} + \gamma^m(F_{\HH}^*, \emp)$ by a similar argument as in the proof of \Cref{Thm K4-stable non-elem}, but also $\Kff{m+1}{1} + \gamma^m(F_{\HH}^*, \emp)$ does not extend $\Kff{m'+1}{1}$ for $m' < m$. Indeed, let $C$ be an irreflexive chain that is $(m+1, 1)$-transitive but not $(m'+1, 1)$-transitive. For instance, we can take $C = (C, R)$ where $C = \{0, 1, \ldots, m'+1\}$ and $R = \{(i, j) : j = i+1\}$, which is $(m+1, 1)$-transitive because there is no path of length $m+1$ by $m' < m$, and is clearly not $(m'+1, 1)$-transitive. It is readily verified that $F_{\HH}^*$ is not a stable image of any upset of $C$, since there is no path in $F_{\HH}^*$ that contains all points of $F_{\HH}^*$. So, we have $C \models \Kff{m+1}{1} + \gamma^m(F_{\HH}^*, \emp)$ and $C \not\models \Kff{m'+1}{1}$. Thus, we obtain the following result, with \Cref{Thm K4-stable non-elem} as the special case of $m=1$.

\begin{theorem}
    For each $m \geq 1$, there are infinitely many finitely axiomatizable $\Kff{m+1}{1}$-stable logics that are not elementary and are not $\Kff{m'+1}{1}$-stable for $m' < m$, assuming $\P \neq \NP$.
\end{theorem}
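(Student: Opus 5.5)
The plan is to fix $m \geq 1$ and take the family $\{\HH_n : n \in \omega\}$ from the proof of \Cref{Thm K4-stable non-elem}, for instance $\HH_n = \MM_{n+4}$. For each $n$ I set $L_n^m = \Kff{m+1}{1} + \gamma^m(F_{\HH_n}^*, \emp)$ and verify four properties in turn: stability, non-elementarity, pairwise distinctness, and the failure of $\Kff{m'+1}{1}$-stability for $m' < m$. Finite axiomatizability is immediate, since the frame is finite and the axioms are a single formula.

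\textbf{Stability.} Since $F_{\HH_n}^*$ is a finite rooted transitive frame with reflexive root $r_H$, it is a $\Kff{m+1}{1}$-frame. The analogue of \Cref{Lem K4-stable} in \cite[Lemma 5.15]{takahashi2025stablecanonicalrulesformulas} then gives that $L_n^m$ is $\Kff{m+1}{1}$-stable.

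\textbf{Non-elementarity.} I rerun \Cref{Lem graph to frame} with the analogue of \Cref{Prop stable formulas} in \cite[Theorem 5.5]{takahashi2025stablecanonicalrulesformulas}. That analogue says that a finite $\Kff{m+1}{1}$-frame $G$ refutes $\gamma^m(F, \emp)$ iff $F$ is a stable image of a suitable rooted upset of $G$. Two points need care here.
\begin{itemize}
\item Checking $(m+1,1)$-transitivity of the input is in $\P$ by \Cref{Lem alpha in P}.
\item The reduction $\GG \mapsto F_\GG$ produces transitive frames. On transitive frames, the notion of rooted upset in the $\Kff{m+1}{1}$ setting (generated by $R^{\leq m}$) coincides with the ordinary one, so \Cref{Claim} transfers verbatim.
\end{itemize}
Hence \Val{$L_n^m$} is $\coNP$-complete. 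The contraposition of \Cref{Thm elementary in P}, which holds for any finitely axiomatizable logic, then gives non-elementarity under $\P \neq \NP$. I expect this transfer to be the main obstacle: one must confirm that the cited characterization is stated for finite frames in exactly this rooted-upset form, and that nothing in the hardness argument used transitivity of the input beyond membership in the base class.

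\textbf{Distinctness.} The cardinality argument from \Cref{Thm K4-stable non-elem} works unchanged. $F_{\HH_n}^*$ refutes its own formula but validates $\gamma^m(F_{\HH_{n'}}^*, \emp)$ for $n < n'$, because a surjective image cannot have more points than its domain.

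\textbf{Not $\Kff{m'+1}{1}$-stable for $m' < m$.} Take the irreflexive chain $C$ on $\{0,\dots,m'+1\}$ with $iRj$ iff $j = i+1$.
\begin{itemize}
\item $C$ has no path of length $m+1$, so it is $(m+1,1)$-transitive; it is not $(m'+1,1)$-transitive.
\item Every point of $C$ is irreflexive, and a stable map cannot send an irreflexive point onto the reflexive root $r_H$. So $F_{\HH_n}^*$ is not a stable image of any upset of $C$ (alternatively, $F_{\HH_n}^*$ has no path covering all its points). Thus $C \models L_n^m$.
\end{itemize}
Therefore $L_n^m \not\supseteq \Kff{m'+1}{1}$. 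Since an $M$-stable logic must by definition extend $M$, $L_n^m$ is not $\Kff{m'+1}{1}$-stable. Ranging over $n$ gives infinitely many such logics, which proves the statement.
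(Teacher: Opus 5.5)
Your proposal follows the paper's argument essentially step for step. You use the same logics $\Kff{m+1}{1} + \gamma^m(F_{\HH_n}^*, \emp)$, and you transfer stability and the $\coNP$-completeness reduction via the pre-transitive analogues. Your remark that rooted upsets in the $R^{\leq m}$ sense coincide with ordinary ones on the transitive frames $F_\GG$ is a useful detail the paper leaves implicit. The cardinality argument for distinctness and the irreflexive chain $C$ showing that the logics do not extend $\Kff{m'+1}{1}$ are also the paper's.

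One justification must be dropped, however. Your primary reason for $C \models \gamma^m(F_{\HH_n}^*, \emp)$ is that a stable map cannot send an irreflexive point onto the reflexive root $r_H$, and this is false. Relation preservation only forbids sending a reflexive point to an irreflexive one, since $xRx$ forces $f(x) Q f(x)$. An irreflexive point imposes no such constraint and may be sent anywhere. For instance, collapsing all of $C$ onto a one-point reflexive frame is a surjective stable map.

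The argument that works is your parenthetical one, which is exactly the paper's. A stable image of an upset $\{k, \dots, m'+1\}$ of $C$ is covered by a walk $f(k)\, Q\, f(k+1)\, Q \cdots Q\, f(m'+1)$ in $F_{\HH_n}^*$. Because $F_{\HH_n}^*$ is transitive, each point on such a walk sees every later point. The walk therefore cannot visit both of the incomparable points $a_H$ and $b_H$, so the map cannot be surjective.
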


Since there are only countably many finitely axiomatizable logics, this result shows that there are as many as possible finitely axiomatizable non-elementary $\Kff{m+1}{1}$-stable logics, assuming $\P \neq \NP$. However, the unconditional existence of non-elementary $\Kff{m+1}{1}$-stable logics, as well as the existence of non-canonical $\Kff{m+1}{1}$-stable logics remain open (see also \cite[Table 4.8.1]{FiltrationRevisitedLattices2018}). We conjecture that the logics we constructed in this section are not canonical, hence not elementary.

We conclude by noting that our method does not apply straightforwardly to $\K$-stable logics and $\Sf$-stable logics for different reasons.

\begin{remark}
    For $\K$-stable logics, the issue is that we do not have a counterpart of stable formulas. The best we can do is to use \emph{stable rules}, which are multi-conclusion inference rules that axiomatize all $\K$-stable logics \cite{stablecanonicalrules}. However, axiomatizations of logics via rules are weaker than axiomatizations via formulas, in the sense that a Kripke frame may validate the logic without validating all rules axiomatizing the logic. Algebraically, axiomatizations via formulas characterize the variety corresponding to the logic, whereas axiomatizations via rules only characterize a universal class that generates the variety. Therefore, the very first step of the proof in \Cref{Lem graph to frame}, where we implicitly used the fact that $F \models \Kf + \gamma(F_\HH^*, \emp)$ iff $F \models \Kf$ and $F \models \gamma(F_\HH^*, \emp)$, does not work for axiomatizations of $\K$-stable logics via rules.
\end{remark} 

\begin{remark}
    For $\Sf$-stable logics, the obstruction is very clear. Our construction of $F_\HH^*$ makes an essential use of irreflexive points, which are not allowed in $\Sf$-frames. These irreflexive points, together with the fact that a stable map cannot map a reflexive point to an irreflexive point, are needed to control the potential stable maps. 
\end{remark}

\section*{Acknowledgements}
I am very grateful to Ian Hodkinson and Nick Bezhanishvili for their valuable comments on this paper. I would also like to thank Dmitry Shkatov for insightful discussions. I acknowledge the support of the Student Exchange Support Program (Graduate Scholarship for Degree Seeking Students) of the Japan Student Services Organization.

\section*{Use of AI}
ChatGPT 5.5 Thinking was used for literature review on graph theory. All mathematical content was developed by the author. The paper was written entirely by the author with the help of inline text completion by GitHub Copilot. The author takes full responsibility for the correctness and content of the manuscript.

\printbibliography

\end{document}